\newcommand{\ncom}{\newcommand}
\ncom{\beqn}{\begin{eqnarray*}}
\ncom{\eeqn}{\end{eqnarray*}}
\ncom{\beq}{\begin{eqnarray}}
\ncom{\eeq}{\end{eqnarray}}
\ncom{\cal}{\mathcal}
\ncom{\eop}{\hfill{{\rule{2.5mm}{2.5mm}}}}
\ncom{\eoe}{\hfill{{\rule{1.5mm}{1.5mm}}}}
\ncom{\eof}{\hfill{{\rule{1.5mm}{1.5mm}}}}
\ncom{\hone}{\mbox{\hspace{1em}}}
\ncom{\htwo}{\mbox{\hspace{2em}}}
\ncom{\hthree}{\mbox{\hspace{3em}}}
\ncom{\hfour}{\mbox{\hspace{4em}}}
\ncom{\hsev}{\mbox{\hspace{7em}}}
\ncom{\vone}{\vskip 2ex}
\ncom{\vtwo}{\vskip 4ex}
\ncom{\vonee}{\vskip 1.5ex}
\ncom{\vthree}{\vskip 6ex}
\ncom{\vfour}{\vspace*{8ex}}
\ncom{\norm}{\|\;\;\|}
\ncom{\integ}[4]{\int_{#1}^{#2}\,{#3}\,d{#4}}
\ncom{\inp}[2]{\langle{#1},\,{#2} \rangle}
\ncom{\Inp}[2]{\Langle{#1},\,{#2} \Langle}
\ncom{\vspan}[1]{{{\rm\,span}\#1 \}}}
\ncom{\dm}[1]{\displaystyle {#1}}
\newtheorem{theorem}{\bf Theorem}[section]
\newtheorem{proposition}[theorem]{\bf Proposition}
\newtheorem{lemma}[theorem]{\bf Lemma}
\newtheoremstyle
    {remarkstyle}
    {}
    {11pt}
    {}
    {}
    {\bfseries}
    {:}
    {     }
    {\thmname{#1} \thmnumber{#2} }
\theoremstyle{remarkstyle}
\newtheorem{remark}[theorem]{\bf Remark}
\newtheorem{definition}[theorem]{\bf Definition}
\newtheorem{example}[theorem]{\bf Example}
\begin{document}

\baselineskip=17pt

\title[Weighted Translation Semigroups: Multivariable Case]{Weighted Translation Semigroups: Multivariable Case}

\author[G. M. Phatak]{Geetanjali M. Phatak}
\address{Department of Mathematics, S. P. College\\
Pune- 411030, India}
\email{gmphatak19@gmail.com}

\author[V. M. Sholapurkar]{V. M. Sholapurkar}
\address{Head, Department of Mathematics, S. P. College\\
Pune- 411030, India}
\email{vmshola@gmail.com}

\date{}

\begin{abstract}
M. Embry and A. Lambert initiated the study of a weighted translation semigroup $\{S_t\}$ in ${\cal B}(L^2({\mathbb R_+})),$ with a view to explore a continuous analogue of a weighted shift operator. We continued the work, characterized some special types of semigroups and developed an analytic model for the left invertible weighted translation semigroup. The present paper deals with the generalization of the weighted translation semigroup in multi-variable set up. We develop the toral analogue of the analytic model and also describe the spectral picture. We provide many examples of weighted translation semigroups in multi-variable case. Further, we replace the space $L^2({\mathbb R_+})$ by $L^2({\mathbb R_+^d})$ and explore the properties of weighted translation semigroup $\{S_{\overline{t}}\}$ in ${\cal B}(L^2({\mathbb R_+^d})),$ in both one and multi variable cases. 
\end{abstract}
\subjclass[2010]{Primary 47B20,  47B37; Secondary 47A10, 46E22}
\keywords{weighted translation semigroup, completely hyperexpansive, analytic, Taylor spectrum }
\maketitle
\maketitle

\section{Introduction}
The class of weighted shift operators has been systematically studied in \cite{Se} and generalized to multivariable set up in \cite{JL}. With a view to study a continuous analogue of weighted shifts, M. Embry and A. Lambert initiated the study of a semigroup of operators $\{S_t\}$  indexed by a non-negative real number $t$ in \cite{EL1}, \cite{EL2} and termed it as weighted translation semigroup. The operators $S_t$ are defined on $L^2(\mathbb R_+)$ by using a weight function.
This work is continued in \cite{PS1}, where we characterized some special types of weighted translation semigroups, especially, hyperexpansive weighted translation semigroups. In \cite{PS2}, we proved that the weighted translation semigroup $\{S_t\}$ is analytic and possesses wandering subspace property. Also we proved that a left invertible operator $S_t$ is modeled as a multiplication by $z$ on a suitable reproducing kernel Hilbert space. It turned out that the spectrum of a left invertible operator $S_t$ is a closed disc and the point spectrum is empty. 

In section 2, we introduce the commuting tuple $S_{\bf t}$ whose components are weighted translation semigroups in ${\cal B}(L^2({\mathbb R_+}))$ and discuss its properties. We present several examples in this section.

In section 3, we prove that the tuple $S_{\bf t}$ is analytic and possesses wandering subspace property. We also prove that the tuple $S_{\bf t}$ is unitarily equivalent to a commuting operator valued multishift. Further, we describe the toral analytic model for the toral left invertible tuple $S_{\bf t}$. At the end of this section, the Taylor spectrum of the toral left invertible tuple $S_{\bf t}$ is discussed.
The major part of the work in section 3 is devoted to the comparison of the commuting $d$-tuple $S_{\bf t}$ under consideration and a multishift $S_{\bf \lambda}$ on directed cartesian product of rooted directed trees as described in \cite{CPT}. At the end of this paper, a comparative analysis of these two classes of operator tuples has been summarized.
The techniques used in the proofs of results in section 3 are similar to those in \cite{CPT}. However, the intrinsic differences in the classes of operators get reflected in the proofs accounting for some subtle differences. In the recent work in \cite{Ch}, S. Chavan developed toral and spherical analytic models for a commuting tuple of operators.

In section 4, we define the weighted translation semigroup $\{S_{\overline{t}}\}$ in ${\cal B}(L^2({\mathbb R_+^d}))$ and discuss its properties. In fact, the weighted translation semigroup $\{S_t\}$ is a special case where $d=1.$  
In section 5, we introduce the special commuting tuple whose components are weighted translation semigroups in ${\cal B}(L^2({\mathbb R_+^d}))$ and discuss several properties. In this case, we observe that the symbols of the semigroups under consideration are multi-variable functions. This consideration results in some differences in properties of weighted translation semigroups studied earlier.  

\section{The Commuting tuple}
\subsection{Prelude}
Let $T=(T_1,\cdots ,T_d)$ be a tuple of commuting bounded linear operators $T_i(1\leq i\leq d)$ on a Hilbert space $H.$ Then $T^*$ represents $(T_1^*,\cdots ,T_d^*)$ and for $p=(p_1,\cdots ,p_d)\in \mathbb N^d,~ T^p$ denotes $T_1^{p_1}\cdots T_d^{p_d}.$ 
The open polydisc centered at the origin and of polyradius $r=(r_1,\cdots ,r_d)$ with $r_1,\cdots ,r_d >0$ denoted by $\mathbb D_r^d$ is
$\mathbb D_r^d:=\{z=(z_1,\cdots ,z_d)\in \mathbb C^d~:~|z_1|<r_1,\cdots ,|z_d|<r_d \}.$
The class of completely hyperexpansive operators is explored in \cite{SA} and generalised to multi variable set up in \cite{At}. We first recall some definitions useful in the sequel (\cite{At-1},\cite{CS1}).
\begin{definition} \
\begin{enumerate}
\item A $d$-tuple $S=(S_1,\cdots ,S_d)$ of commuting operators $S_i$ in ${\cal B}(H)$ is subnormal if there exist a Hilbert space $K$ containing $H$ and a $d$-tuple $N=(N_1,\cdots ,N_d)$ of commuting normal operators $N_i$ in ${\cal B}(K)$ such that $N_iH\subseteq H$ and $N_i|H=S_i$ for $1\leq i\leq d.$

\item Let $H_1,\cdots ,H_d $ be commuting operators on $H.$ The tuple $(H_1,\cdots ,H_d)$ is called {\it hyponormal} if the $d\times d$ operator matrix $([H_j^*,H_i])\geq 0,$ where $$[H_j^*,H_i]=H_j^*H_i-H_iH_j^*.$$

\item A commuting $d$-tuple $Q=(Q_1,\cdots ,Q_d)$ of positive operators $Q_1,\cdots ,Q_d$ in ${\cal B}(H)$ is called as the {\it generating $d$-tuple} on $H.$

\item For a commuting $d$-tuple $T=(T_1,\cdots ,T_d)$ of operators on $H,$ the {\it spherical generating $1$-tuple associated with $T$} is given by 
$$Q_s(X)=\sum_{i=1}^d T_i^*XT_i ~~~(X\in {\cal B}(H)).$$

\item For a fixed integer $p\geq 1,$ the tuple $T$ is said to be {\it spherical $p$-expansion (resp. spherical $p$-contraction)} if 
$$B_p(Q_s)=\sum_{q\in \mathbb N,0\leq q\leq p} (-1)^q {p\choose q} Q_s^q(I) \leq 0 ~(resp. \geq 0),$$
where $Q_s^0(I)=I.$

\item The tuple $T$ is called {\it spherical $p$-hyperexpansion (resp. spherical $p$-hypercontraction)} if $T$ is a spherical $k$-expansion (resp. spherical $k$-contraction) for all $k=1,\cdots ,p.$ 
If $B_p(Q_s)=0,$ then $T$ is a {\it spherical $p$-isometry.}

\item The tuple $T$ is said to be {\it spherical complete hyperexpansion (resp. spherical complete hypercontraction)} if $T$ is a spherical $p$-expansion (resp. spherical $p$-contraction) for all positive integers $p.$ 

\item Let $Q_s$ be the spherical generating $1$-tuple associated with $T.$ Then $T$ is {\it jointly left-invertible} if there exists $\alpha >0$ such that $Q_s(I)\geq \alpha I.$ Let $T$ be a jointly left-invertible $d$-tuple of bounded operators on $H.$ The {\it spherical Cauchy dual} of $T$ is defined as the $d$-tuple $T^s=(T_1^s,\cdots ,T_d^s),$ where $T_i^s=T_i(Q_s(I))^{-1}(i=1,\cdots ,d).$ 

\item Given a commuting $d$-tuple $T=(T_1,\cdots ,T_d)$ on $H,$ a {\it toral generating $d$-tuple} is given by $$Q_t=(Q_1,\cdots ,Q_d),~Q_i(X)=T_i^*XT_i (X\in {\cal B}(H)).$$

\item Let $Q_t$ be the toral generating $d$-tuple associated with $T.$ Then $T$ is {\it toral left-invertible} if there exists $\alpha >0$ such that $Q_i(I)\geq \alpha I,$ for $1\leq i\leq d.$ Let $T$ be a toral left-invertible $d$-tuple of bounded operators on $H.$ The {\it toral Cauchy dual} of $T$ is defined as the $d$-tuple $T^\prime=(T_1^\prime,\cdots ,T_d^\prime),$ where $T_i^\prime=T_i(T_i^*T_i)^{-1}(i=1,\cdots ,d).$ 

\item The tuple $T$ is called {\it toral complete hyperexpansion} if $$B_n(Q_t)=\sum_{p\in \mathbb N^d,0\leq p\leq n} (-1)^{|p|} {n\choose p} Q_t^p(I) \leq 0  ~~~\rm{for~ all} ~n\in \mathbb N^d \ ~ \{0\},$$ where $Q_t^q(I)=(Q_1^{q_1}\circ \cdots \circ Q_d^{q_d})(I)$ for $q=(q_1,\cdots ,q_d)\in \mathbb N^d.$ 
\end{enumerate}
\end{definition}
In definitions (5), (6) and (7), if $p=1$ then by convention, the prefix $1$ is dropped and if $m=1$ then we drop the term spherical.

We now define the operator $S_t$ as defined in \cite{PS1}, but in a notation suitable to multi-variable set up.
For a fixed positive integer $i,$ let $\varphi_i$ be a measurable, positive function on $\mathbb R_+$ such that for each fixed $t_i\in \mathbb R_+,$ the function ${\varphi_i}_{t_i}$ defined by 
\begin{equation*}
{\varphi_i}_{t_i}(x) =
\begin{cases}
\displaystyle \sqrt{\frac{\varphi_i(x)}{\varphi_i(x-t_i)}} & \text{if~ $x\geq t_i$},\\
0 & \text{if~ $x < t_i$}
\end{cases}
\end{equation*}
is essentially bounded. 
\begin{definition}
For a fixed positive integer $i$ and each fixed $t_i\in \mathbb R_+,$ we define ${S_i}_{t_i}$ on $L^2({\mathbb R_+})$ by  
\begin{equation*}
{S_i}_{t_i}f(x) =
\begin{cases}
{\varphi_i}_{t_i}(x)f(x-t_i) & \text{if~ $x \geq t_i$},\\
0 & \text{if~ $x < t_i$.}
\end{cases}
\end{equation*}
\end{definition}

The family $G_i=\{{S_i}_{t_i}:t_i\in \mathbb R_+\}$ in ${\cal B}(L^2({\mathbb R_+}))$ is a semigroup with ${S_i}_0=I,$ the identity operator and for all $t_i,r_i~\in \mathbb R_+$, ${S_i}_{t_i}\circ {S_i}_{r_i}={S_i}_{t_i+r_i}.$ Observe that the family $G_1\times \cdots \times G_d$ is also a semigroup. 

We say that ${\varphi_i}_{t_i}$ is a {\it weight function corresponding to the operator ${S_i}_{t_i}$}. Further, the semigroup $G_i=\{{S_i}_{t_i}:t_i\in \mathbb R_+\}$ is referred to as the {\it weighted translation semigroup with symbol $\varphi_i$}. Throughout this article, we assume that the symbol $\varphi_i,1\leq i\leq d$ is a continuous function on $\mathbb R_+.$ 

\begin{remark}
Consider a tuple of operators $({S_1}_{t_1},\cdots ,{S_d}_{t_d}).$
It can be seen that for $1\leq i , j \leq d,$
$${S_i}_{t_i}{S_j}_{t_j}={S_j}_{t_j}{S_i}_{t_i}$$ if and only if 
$$\frac{\varphi_i (x)\varphi_j (x-t_i)}{\varphi_i (x-t_i)\varphi_j (x-t_i-t_j)}
=\frac{\varphi_j (x)\varphi_i (x-t_j)}{\varphi_j (x-t_j)\varphi_i (x-t_i-t_j)}$$
for all $x\geq t_i+t_j.$ 
Observe that if $\varphi_i=\varphi_j,$ then the operators ${S_i}_{t_i}$ and ${S_j}_{t_j}$ commute. \end{remark}
\begin{example} The pair $({S_1}_{t_1},{S_2}_{t_2})$ is commuting for the following symbols:
\begin{enumerate}
\item $\varphi_1(x)=c,~~\varphi_2(x)=e^x$
\item $\varphi_1(x,y)=e^{-x},~~\varphi_2(x,y)=e^x$ \eop
\end{enumerate}
\end{example}
Recall that $${S_i}_{t_i}^*f(x) = \sqrt{\frac{\varphi_i(x+t_i)}{\varphi_i(x)}}f(x+t_i) \quad \mbox{for almost every}~ x\in \mathbb R_+.$$  
Also $${S_i}_{t_i}^*{S_i}_{t_i}f(x) = \frac{\varphi_i(x+t_i)}{\varphi_i(x)}f(x) \quad \mbox{for almost every}~ x\in \mathbb R_+.$$   
Note that the semigroup $G_1\times \cdots \times G_d$ is a toral isometry if every commuting $d$-tuple $S_{\bf t}=({S_1}_{t_1},\cdots ,{S_d}_{t_d})$ in $G_1\times \cdots \times G_d$ is a toral isometry. 
\begin{remark} \label{m4} 
The commuting $d$-tuple $({S_1}_{t_1},\cdots ,{S_d}_{t_d})$ is a toral isometry if and only if $$I-{S_i}_{t_i}^*{S_i}_{t_i}=0 ~{\rm for~ all}~ 1\leq i\leq d.$$ This is true if and only if $$\varphi_i(x)-\varphi_i(x+t_i)=0 ~{\rm for ~all}~ x,t_i\in \mathbb R_+,~1\leq i\leq d.$$ Therefore the semigroup $G_1\times \cdots \times G_d$ is a toral isometry if and only if $\varphi_i$ is a constant function for $1\leq i\leq d.$ \end{remark}

Using the characterization of a toral isometry as stated above, we prove that the toral left invertible commuting $d$-tuple $S_{\bf t}$ admits a polar decomposition.
\begin{proposition} \label{m50} If $S_{\bf t}=({S_1}_{t_1},\cdots ,{S_d}_{t_d})$ is a toral left invertible commuting $d$-tuple, then there exist a toral isometry $({U_1}_{t_1},\cdots ,{U_d}_{t_d})$ and a commuting $d$-tuple $({D_1}_{t_1},\cdots ,{D_d}_{t_d})$ of diagonal, positive, invertible bounded operators on $L^2(\mathbb R_+)$ such that ${S_i}_{t_i}={U_i}_{t_i}{D_i}_{t_i},~~~~1\leq i\leq d.$ Further, this decomposition is unique. \end{proposition}
\begin{proof} 
For a fixed positive integer $i,~1\leq i\leq d,$ define the operators ${U_i}_{t_i}$ and ${D_i}_{t_i}$ on $L^2(\mathbb R_+)$ as follows:
\begin{equation*}
{U_i}_{t_i}f(x) =
\begin{cases}
f(x-t_i) & \text{if~ $x \geq t_i$},\\
0 & \text{if~ $x < t_i$}
\end{cases}
\end{equation*} and 
$$ {D_i}_{t_i}f(x) = \sqrt{\frac{\varphi_i(x+t_i)}{\varphi_i(x)}}f(x) ,\forall x \geq 0. $$ It can be seen that ${S_i}_{t_i}={U_i}_{t_i}{D_i}_{t_i},~~~~1\leq i\leq d$ and this decomposition is unique. \end{proof}
We now quote the proposition which is useful in constructing examples of spherical hyperexpansion using toral hyperexpansion \cite[Proposition 3.7]{CS1}.
\begin{proposition} \label{m5} Let $T=(T_1,\cdots ,T_m)$ be an m-tuple on a Hilbert space $\cal H$ and set $S=(T_1/{\sqrt m},\cdots ,T_m/{\sqrt m}).$ If $T$ is a toral complete hyperexpansion (resp. toral p-expansion, toral p-isometry) then $S$ is a spherical complete hyperexpansion (resp. spherical p-expansion, spherical p-isometry). \end{proposition}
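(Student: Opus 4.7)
The plan is to express the spherical generating $1$-tuple of $S$ in terms of the toral generating $m$-tuple of $T$ and then exploit a multinomial expansion. Since $S_i = T_i/\sqrt{m}$ for each $i$, direct computation gives
$$Q_s(X) \;=\; \sum_{i=1}^m S_i^{*} X S_i \;=\; \frac{1}{m}\sum_{i=1}^m T_i^{*} X T_i \;=\; \frac{1}{m}\bigl(Q_1+\cdots+Q_m\bigr)(X), \qquad X \in \mathcal{B}(H).$$
The first thing I would verify is that the maps $Q_i$ on $\mathcal{B}(H)$ commute pairwise: a two-line calculation shows $Q_iQ_j(X)=T_j^{*}T_i^{*}XT_iT_j$, which equals $Q_jQ_i(X)$ precisely because the original tuple $T$ is commuting. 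This lets me manipulate polynomials in $Q_1,\ldots,Q_m$ by the usual multinomial theorem.

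Next, I would rewrite both quantities $B_p(Q_s)$ and $B_n(Q_t)$ in factored form, namely $B_p(Q_s)=(I-Q_s)^p(I)$ and $B_n(Q_t)=\prod_{i=1}^m (I-Q_i)^{n_i}(I)$, and then apply the multinomial expansion to $I-Q_s=\frac{1}{m}\sum_{i=1}^m (I-Q_i)$. This yields the key identity
$$B_p(Q_s) \;=\; \frac{1}{m^{p}}\sum_{|n|=p}\binom{p}{n}\,\prod_{i=1}^m (I-Q_i)^{n_i}(I) \;=\; \frac{1}{m^{p}}\sum_{|n|=p}\binom{p}{n}\,B_n(Q_t),$$
where $\binom{p}{n}=p!/(n_1!\cdots n_m!)$ is the multinomial coefficient and the sum is over $n=(n_1,\ldots,n_m)\in\mathbb{N}^m$ with $|n|=n_1+\cdots+n_m=p$.

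Since each coefficient $\binom{p}{n}/m^{p}$ is strictly positive, the three implications now fall out at once. If every $B_n(Q_t)$ with $|n|=p$ is $\leq 0$ (toral $p$-expansion), the weighted sum on the right is $\leq 0$, so $S$ is a spherical $p$-expansion; the same argument with equality gives the $p$-isometry statement; and for the complete hyperexpansion case one simply notes that the toral hypothesis supplies $B_n(Q_t)\leq 0$ for \emph{every} $n\neq 0$, in particular for every $n$ with $|n|=p$ and every $p\geq 1$, which forces $B_p(Q_s)\leq 0$ for all $p\geq 1$.

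The only conceptual step is the commutativity of the $Q_i$ on $\mathcal{B}(H)$ that legitimizes the multinomial expansion; beyond that the proof is purely combinatorial, so I expect no real obstacle.
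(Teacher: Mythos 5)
Your argument is correct. Note that the paper does not prove this statement at all --- it is quoted verbatim from \cite[Proposition 3.7]{CS} --- so there is no internal proof to compare against; your derivation, resting on the identity $B_p(Q_s)=\frac{1}{m^p}\sum_{|n|=p}\binom{p}{n}B_n(Q_t)$ obtained from the multinomial expansion of $I-Q_s=\frac{1}{m}\sum_{i=1}^m(I-Q_i)$ (legitimate because the $Q_i$ commute when $T$ does), is exactly the standard argument used in the cited source. The only caveat is that the paper never defines ``toral $p$-expansion'' or ``toral $p$-isometry''; you have silently adopted the convention that these mean $B_n(Q_t)\leq 0$ (resp. $=0$) for all $n$ with $|n|=p$, which is the convention under which the identity delivers the conclusion, so you should state that convention explicitly.
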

Note that if $T$ is a complete hyperexpansion (resp. $p$-isometry) then the $m$-tuple $(T,\cdots ,T)$ is a toral complete hyperexpansion (resp. toral $p$-isometry). By Propostion \ref{m5}, $(T/\sqrt m,\cdots ,T/\sqrt m)$ is a spherical complete hyperexpansion (resp. spherical $p$-isometry). \\ 
We here quote proposition useful in constructing hyponormal tuples.
\begin{proposition} \cite[Proposition 3]{At-1} \label{m7} Let $S\in \cal B(H).$ Then $S$ is subnormal if and only if $(I,S,\cdots ,S^{p-1})$ is hyponormal for every $p\geq 1.$ \end{proposition}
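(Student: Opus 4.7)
The plan is to reduce both directions to the Bram--Halmos characterization of subnormality: $S\in\mathcal B(H)$ is subnormal if and only if
$$\sum_{i,j=0}^{p-1}\langle S^{i}h_{j},S^{j}h_{i}\rangle \;\geq\; 0$$
for every $p\geq 1$ and every choice of $h_{0},\ldots,h_{p-1}\in H$. The hyponormality of the tuple $(I,S,\ldots,S^{p-1})$ is, by the definition quoted in item (2), exactly the assertion that for all $h_{0},\ldots,h_{p-1}\in H$,
$$\sum_{i,j=0}^{p-1}\langle [S^{*j},S^{i}]h_{j},h_{i}\rangle \;\geq\; 0,$$
and expanding the commutator and moving $S^{*j}$ across the inner products rewrites this as
$$\sum_{i,j=0}^{p-1}\langle S^{i}h_{j},S^{j}h_{i}\rangle \;\geq\; \Bigl\|\sum_{i=0}^{p-1}S^{*i}h_{i}\Bigr\|^{2}.$$
This algebraic identity is the bridge between the two properties, and I would derive it first.

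For the implication ``hyponormality of every $(I,S,\ldots,S^{p-1})$ $\Rightarrow$ subnormality'', the right-hand side above is automatically non-negative, so the Bram--Halmos sum is non-negative for all $p$, and subnormality follows directly.

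For the converse, I would use the normal extension $N\in\mathcal B(K)$ of $S$ with $K\supseteq H$, letting $P$ be the orthogonal projection of $K$ onto $H$. The two facts I need are $S^{i}=N^{i}|_{H}$ (because $H$ is invariant under $N$) and $S^{*i}=PN^{*i}|_{H}$ (verified by a one-line adjoint computation against vectors in $H$). Using normality of $N$ in the form $N^{*j}N^{i}=N^{i}N^{*j}$, I would compute, for $h\in H$,
$$S^{*j}S^{i}h = PN^{*j}N^{i}h = PN^{i}N^{*j}h,\qquad S^{i}S^{*j}h = N^{i}PN^{*j}h,$$
and then check, by pairing and using $PN^{*i}h_{i}=S^{*i}h_{i}$, that
$$\sum_{i,j=0}^{p-1}\langle [S^{*j},S^{i}]h_{j},h_{i}\rangle \;=\; \Bigl\|\sum_{i=0}^{p-1}N^{*i}h_{i}\Bigr\|^{2}-\Bigl\|\sum_{i=0}^{p-1}S^{*i}h_{i}\Bigr\|^{2}.$$
Because $\sum_{i}S^{*i}h_{i}=P\sum_{i}N^{*i}h_{i}$ and $P$ is a contraction, the right-hand side is non-negative, which gives the hyponormality of $(I,S,\ldots,S^{p-1})$ for every $p\geq 1$.

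The main obstacle, and the only step that is not routine bookkeeping, is the clean identification of $[S^{*j},S^{i}]$ on $H$ as the operator $P(N^{i}N^{*j}-N^{i}PN^{*j})|_{H}$; once this is combined with the normality of $N$, the telescoping into the difference of two squared norms is immediate, and both directions of the equivalence reduce to the single pivotal identity displayed above together with Bram--Halmos.
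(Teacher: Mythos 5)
Your argument is correct and is essentially the standard proof of this result (it is the argument in the cited source, Athavale's Proposition 3): the pivotal identity $\sum_{i,j}\langle [S^{*j},S^{i}]h_{j},h_{i}\rangle=\sum_{i,j}\langle S^{i}h_{j},S^{j}h_{i}\rangle-\bigl\|\sum_{i}S^{*i}h_{i}\bigr\|^{2}$ checks out, and both directions follow from it together with Bram--Halmos exactly as you describe. Note that the paper itself offers no proof of this statement --- it is quoted verbatim from \cite{At-1} --- so there is nothing in the text for your argument to diverge from.
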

\begin{example} We here present examples of some special tuples.   
\begin{enumerate}
\item Recall that functions $\varphi_1 (x)=\log (x+2),\varphi_2(x)=\frac{x+\lambda}{x+1} (0< \lambda <1), $ \\ $\varphi_3 (x)=2-e^{-x} $ are completely alternating. Therefore each operator ${S_i}_{t_i}$ in a weighted translation semigroup $G_i$ with symbol $\varphi_i,~~1\leq i\leq 3$ is completely hyperexpansive. Hence the pair $({S_i}_{t_i},{S_i}_{t_i})$ is a toral complete hyperexpansion and the pair $({S_i}_{t_i}/\sqrt 2,{S_i}_{t_i}/\sqrt 2)$ is a spherical complete hyperexpansion for $1\leq i\leq 3.$ 

\item Let $\varphi_1(x)=\sqrt{x+1}.$ Since the function $\varphi_1$ is concave, each operator ${S_1}_{t_1}$ in a weighted translation semigroup $G_1$ with symbol $\varphi_1$ is 2-hyperexpansion. Hence the pair $({S_1}_{t_1},{S_1}_{t_1})$ is a toral 2-hyperexpansion and the pair \\ $({S_1}_{t_1}/\sqrt 2,{S_1}_{t_1}/\sqrt 2)$ is a spherical 2-hyperexpansion. 

\item Observe that each operator ${S_i}_{t_i}$ in a weighted translation semigroup $G_i$ with symbol $\varphi_i,~~1\leq i\leq 2$ is an isometry, where $\varphi_1(x)=c,\varphi_2(x)=d.$ Therefore the pair $({S_1}_{t_1},{S_2}_{t_2}),$ is a toral isometry and the pair \\ $({S_1}_{t_1}/\sqrt 2,{S_2}_{t_2}/\sqrt 2)$ is a spherical isometry.

\item Observe that each operator ${S_1}_{t_1}$ in a weighted translation semigroup $G_1$ with symbol $\varphi_1 (x,y)=x+1$ is a 2-isometry. Hence the pair $({S_1}_{t_1},{S_1}_{t_1})$ is a toral 2-isometry and the pair $({S_1}_{t_1}/\sqrt 2,{S_1}_{t_1}/\sqrt 2)$ is a spherical 2-isometry. 

\item Recall that functions $\varphi_1 (x)=\frac{1}{x+1},\varphi_2(x)=\frac{x+\lambda}{x+1} (\lambda >1), \varphi_3 (x)=e^{-x} $ are completely monotone. Therefore each operator ${S_i}_{t_i}$ in a weighted translation semigroup $G_i$ with symbol $\varphi_i,~~1\leq i\leq 3$ is subnormal contraction. Hence the pair $(I,{S_i}_{t_i})$ is hyponormal. \eop
\end{enumerate}
\end{example} 

\section{Analytic Model and Taylor Spectrum}
In this section, we prove that a commuting $d$-tuple  $S_{\bf t}=({S_1}_{t_1},\cdots ,{S_d}_{t_d})$ is analytic and possesses wandering subspace property. Also a toral left invertible commuting $d$-tuple  $S_{\bf t}$ is modeled as a multiplication by coordinate functions $z_i$ on a suitable reproducing kernel Hilbert space. 

Recall that the Cauchy dual $S_t^{\prime}$ of a left invertible operator $S_t$ is given by  
\begin{equation*}
S_t^{\prime}f(x) =
\begin{cases}
\displaystyle \frac{1}{\varphi_t(x)}f(x-t) & \text{if~ $x\geq t$},\\
0 & \text{if~ $x<t$}.
\end{cases}
\end{equation*}

Let $S_{\bf t}=({S_1}_{t_1},\cdots ,{S_d}_{t_d})$ be a toral left invertible commuting $d$-tuple. Then the toral Cauchy dual tuple $S_{\bf t}^\prime=({S_1}_{t_1}^\prime,\cdots ,{S_d}_{t_d}^\prime)$ of $S_{\bf t}$ is given by for $1\leq i\leq d,$
\begin{equation*}
{S_i}_{t_i}^\prime f(x) =
\begin{cases}
\displaystyle \frac{1}{{\varphi_i}_{t_i}(x)}f(x-t_i) & \text{if~ $x\geq t_i$},\\
0 & \text{if~ $x < t_i$.}
\end{cases}
\end{equation*}
Note that the toral Cauchy dual $d$-tuple $({S_1}_{t_1}^\prime,\cdots ,{S_d}_{t_d}^\prime)$ is commuting if and only if the toral left invertible $d$-tuple $({S_1}_{t_1},\cdots ,{S_d}_{t_d})$ is commuting.  

We first describe the joint kernel $E$ of the commuting $d$-tuple $S_{\bf t}^*=({S_1}_{t_1}^*, \cdots ,{S_d}_{t_d}^*).$ Recall that $E=\cap_{i=1}^d ker {S_i}_{t_i}^*$ and ker ${S_i}_{t_i}^*=\chi_{[0,t_i)}L^2(\mathbb R_+)$ (Refer \cite[Lemma 3.2]{PS2}).  
Observe that $\chi_{[0,t_i)}L^2(\mathbb R_+)\subset \chi_{[0,t_j)}L^2(\mathbb R_+) ~{\rm if}~ t_i< t_j.$ Thus 
 $E=\chi_{[0,t_j)}L^2(\mathbb R_+), $ \\ $~{\rm where}~ t_j=\min \{t_1,\cdots ,t_d\}.$

A commuting $d$-tuple $T=(T_1,\cdots ,T_d)$ on a Hilbert space $H$ is called {\it analytic} if $\cap_{\alpha \in \mathbb N^d}T^\alpha H=\{0\}.$
\begin{proposition} A commuting $d$-tuple $S_{\bf t}=({S_1}_{t_1},\cdots ,{S_d}_{t_d})$ is analytic. \end{proposition}
\begin{proof} Suppose $k=(k_1,\cdots ,k_d )\in \mathbb N^d.$
For simplicity, we use notation $$\sqrt{\frac{\varphi(x)}{\varphi(x-kt)}}=\sqrt{\frac{\varphi_d(x)}{\varphi_d(x-k_dt_d)}}\cdots \sqrt{\frac{\varphi_1(x-k_2t_2-\cdots -k_dt_d)}{\varphi_1(x-k_1t_1-\cdots -k_dt_d)}}.$$
Observe that for $f\in L^2(\mathbb R_+)$
$$ ({S_d}_{t_d}^{k_d}\cdots {S_1}_{t_1}^{k_1}f)(x) = \sqrt{\frac{\varphi(x)}{\varphi(x-kt)}} f(x-k_1t_1-\cdots -k_dt_d) $$
if $x\geq k_1t_1+\cdots +k_dt_d$ and 
$$ ({S_1}_{t_1}^{k_1}\cdots {S_d}_{t_d}^{k_d}f)(x) = 0  ~{\rm if}~ x< k_1t_1+\cdots +k_dt_d.$$
Hence $$\bigcap_{(k_1,\cdots ,k_d )\in \mathbb N^d} {S_1}_{t_1}^{k_1}\cdots {S_d}_{t_d}^{k_d} L^2(\mathbb R_+)\subseteq \bigcap_{(k_1,\cdots ,k_d )\in \mathbb N^d}\chi_{[k_1t_1+\cdots +k_dt_d,\infty)}L^2(\mathbb R_+).$$ The result follows from the fact that 
 $$\bigcap_{(k_1,\cdots ,k_d )\in \mathbb N^d}\chi_{[k_1t_1+\cdots +k_dt_d,\infty)}L^2(\mathbb R_+)=\{0\}.$$ \end{proof}
The following Lemma follows in similar manner as \cite[Lemma 3.5]{PS2}.
\begin{lemma} \label{m8} Let $S_{\bf t}=({S_1}_{t_1},\cdots ,{S_d}_{t_d})$ be a toral left invertible commuting $d$-tuple in $G_1\times \cdots \times G_d$ and let $E$ be the joint kernel of $S_{\bf t}^*.$ Then the multisequence $\{S_{\bf t}^k E\}_{k \in \mathbb N^d}$ is mutually orthogonal. \end{lemma}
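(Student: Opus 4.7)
The plan is to establish mutual orthogonality via a direct support-disjointness argument, using the explicit formula for $S_{\bf t}^k e$ obtained in the proof of the preceding proposition. First, I would invoke the characterization $E = \chi_{[0, t_{\min})} L^2(\mathbb R_+)$ just recorded (with $t_{\min} = \min\{t_1, \ldots, t_d\}$), so that every $e \in E$ vanishes outside $[0, t_{\min})$.

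Writing $k \cdot t := k_1 t_1 + \cdots + k_d t_d$, the formula
\[
(S_{\bf t}^k e)(x) = \sqrt{\frac{\varphi(x)}{\varphi(x - k \cdot t)}} \, e(x - k \cdot t) \quad \text{for } x \geq k \cdot t,
\]
together with the support condition on $e$, makes transparent that $S_{\bf t}^k e$ is essentially supported in the interval $I_k := [k \cdot t, \; k \cdot t + t_{\min})$. For distinct $k, \ell \in \mathbb N^d$ and any $e, f \in E$, one writes
\[
\langle S_{\bf t}^k e, S_{\bf t}^\ell f \rangle = \int_0^\infty \frac{\varphi(x)}{\sqrt{\varphi(x - k \cdot t) \, \varphi(x - \ell \cdot t)}} \, e(x - k \cdot t) \, \overline{f(x - \ell \cdot t)} \, dx,
\]
and observes that the integrand is supported in $I_k \cap I_\ell$; the integral therefore vanishes as soon as this intersection has measure zero.

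The main obstacle is accordingly the disjointness assertion $I_k \cap I_\ell = \emptyset$, equivalently the inequality $|k \cdot t - \ell \cdot t| \geq t_{\min}$, for every pair of distinct multi-indices $k, \ell \in \mathbb N^d$. The coordinate-axis cases $k - \ell = \pm e_i$ are immediate from $t_i \geq t_{\min}$. The general case calls for a careful sign-separation analysis of the positive and negative components of $k - \ell \in \mathbb Z^d$, reducing via the positivity of the $t_i$'s to a one-variable translation argument analogous to the one employed in \cite{PS2}. This disjointness estimate is the crux of the lemma, and once it is in hand the orthogonality drops out of the display above.
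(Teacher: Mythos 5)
Your strategy is exactly the one the paper itself follows: localize the support of $S_{\bf t}^k e$ to the interval $I_k=[k\cdot t,\,k\cdot t+t_{\min})$ (with $t_{\min}=\min\{t_1,\dots,t_d\}$) and conclude from pairwise disjointness of these intervals. You are right to single out the disjointness assertion $|k\cdot t-\ell\cdot t|\geq t_{\min}$ for all distinct $k,\ell\in\mathbb N^d$ as the crux, but your proposal stops there, and no ``sign-separation analysis'' can close this gap, because for $d\geq 2$ the assertion is false: the positive and negative parts of $k-\ell$ can cancel. Take $d=2$, $t_1=t_2=1$, $k=(1,0)$, $\ell=(0,1)$: then $k\cdot t=\ell\cdot t$ and $I_k=I_\ell$; if moreover $\varphi_1=\varphi_2$ the subspaces $S_{\bf t}^kE$ and $S_{\bf t}^{\ell}E$ coincide and are nonzero, so they are certainly not orthogonal. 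Unequal but rationally dependent $t_i$ give further collisions (e.g.\ $t=(1,3/2)$, $k=(3,0)$, $\ell=(0,2)$), and rationally independent $t_i$ make the group $\{m\cdot t:m\in\mathbb Z^d\}$ dense in $\mathbb R$, so some difference $k\cdot t-\ell\cdot t$ falls in $(0,t_{\min})$. In every case some pair of intervals $I_k$, $I_\ell$ with $k\neq\ell$ overlaps, so the final step of your argument fails.

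To be fair, the paper's own proof ends by asserting without justification that $\chi_{I_k}$ and $\chi_{I_\ell}$ are orthogonal for $k\neq\ell$, so the gap you leave open is present there as well; your write-up is in fact more candid about where the difficulty sits. But as a standalone argument your proposal does not prove the lemma, and the counterexamples above indicate that the statement itself needs amendment (for instance, orthogonality of the family indexed by the distinct values of $k\cdot t$, or additional separation hypotheses on $t_1,\dots,t_d$) before either your argument or the paper's can be completed.
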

\begin{remark} \label{m9} Using similar argument, it can be proved that the multisequence $\{{S_{\bf t}^\prime}^k E\}_{k \in \mathbb N^d}$ is also mutually orthogonal. \end{remark}
We say that a commuting $d$-tuple $T=(T_1,\cdots ,T_d)$ on a Hilbert space $H$ possesses wandering subspace property if $H=[E]_T,$ where $E$ is the joint kernel of $T^*$ and $[E]_T=\vee_{\alpha \in \mathbb N^d}T^\alpha E.$
The following Proposition follows in similar manner as \cite[Proposition 3.4]{PS2}.
\begin{proposition} \label{m10} A commuting $d$-tuple $S_{\bf t}=({S_1}_{t_1},\cdots ,{S_d}_{t_d})$ in $G_1\times \cdots \times G_d$ possesses wandering subspace property. \end{proposition}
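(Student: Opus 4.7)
The strategy is to reduce the multivariable wandering subspace property to the single-variable one already established in \cite[Proposition 3.4]{PS2}. The key observation, which was essentially made just before Lemma \ref{m8}, is that the joint kernel
$$E \;=\; \bigcap_{i=1}^d \ker {S_i}_{t_i}^{*} \;=\; \chi_{[0,t_{j_0})} L^2(\mathbb{R}_+),$$
where $t_{j_0} = \min\{t_1,\dots,t_d\}$, coincides with the single-variable kernel $\ker S_{j_0, t_{j_0}}^{*}$. This happens simply because the sets $\chi_{[0,t_i)}L^2(\mathbb{R}_+)$ are nested according to the order of the $t_i$, so the smallest one dominates the intersection.

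With this in hand, the plan is to apply the one-variable wandering subspace result to the single operator $S_{j_0,t_{j_0}}$: by \cite[Proposition 3.4]{PS2},
$$L^2(\mathbb{R}_+) \;=\; \bigvee_{k \in \mathbb{N}} S_{j_0,t_{j_0}}^{k}\bigl(\ker S_{j_0,t_{j_0}}^{*}\bigr) \;=\; \bigvee_{k \in \mathbb{N}} S_{j_0,t_{j_0}}^{k} E.$$
Each of the subspaces on the right equals $S_{\bf t}^{\bf k} E$ for the multi-index $\bf k \in \mathbb{N}^d$ having $k$ in position $j_0$ and zeros elsewhere. Thus each such subspace is contained in $[E]_{S_{\bf t}} = \bigvee_{\alpha \in \mathbb{N}^d} S_{\bf t}^{\alpha} E$, and taking the join over $k$ yields $L^2(\mathbb{R}_+) \subseteq [E]_{S_{\bf t}}$. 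The reverse inclusion is immediate, so equality holds.

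There is essentially no real obstacle here; the step that needs a moment of care is the identification of $E$ with the single kernel of minimal index, but this was already settled in the paragraph preceding Lemma \ref{m8}. A purely direct proof is also possible, parallel to the single-variable one: using the formula computed in the proof of Lemma \ref{m8}, one sees that $S_{\bf t}^{\bf k} E$ densely fills $\chi_{I_{\bf k}} L^2(\mathbb{R}_+)$ for the interval $I_{\bf k} = [\sum_i k_i t_i,\, \sum_i k_i t_i + t_{j_0})$, because the weight $\sqrt{\varphi(x)/\varphi(x-kt)}$ is continuous, positive, and hence bounded away from $0$ and $\infty$ on each such compact interval (using the standing continuity hypothesis on the symbols $\varphi_i$). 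Since the sub-collection $\{I_{(0,\dots,k,\dots,0)}\}_{k \geq 0}$ of these intervals already tiles $\mathbb{R}_+$, the conclusion follows. I would prefer the reduction approach since it is shorter and simply cites \cite[Proposition 3.4]{PS2}, avoiding any recomputation.
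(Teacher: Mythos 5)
Your argument is correct. The paper gives no written proof, saying only that the proposition ``follows in similar manner as \cite[Proposition 3.4]{PS2}''; the intended argument is the direct one you sketch at the end, namely that $S_{\bf t}^{\bf k}E$ lies densely in $\chi_{I_{\bf k}}L^2(\mathbb R_+)$ for $I_{\bf k}=[\sum_i k_it_i,\ \sum_i k_it_i+t_{j_0})$ and that these intervals cover $\mathbb R_+$ (this is also the localization that Lemma \ref{m8} exploits for orthogonality). Your preferred route is a genuine reduction rather than a re-derivation: since the kernels $\chi_{[0,t_i)}L^2(\mathbb R_+)$ are nested, $E=\bigcap_{i}\ker {S_i}_{t_i}^{*}=\ker {S_{j_0}}_{t_{j_0}}^{*}$ for the minimal parameter $t_{j_0}$, so the one-variable wandering subspace theorem applied to the single operator ${S_{j_0}}_{t_{j_0}}$ already yields $L^2(\mathbb R_+)=\bigvee_{k\in\mathbb N}{S_{j_0}}_{t_{j_0}}^{k}E$, and each of these subspaces is $S_{\bf t}^{\alpha}E$ for $\alpha=k\epsilon_{j_0}$, hence contained in $[E]_{S_{\bf t}}$; the reverse inclusion is trivial. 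This is shorter and avoids any multivariable computation; what the paper's direct approach buys instead is the explicit description of \emph{every} $S_{\bf t}^{\alpha}E$, which is needed elsewhere but not for this proposition. One shared caveat: both arguments (and the statement itself) implicitly require each $t_i>0$, since if some $t_i=0$ then ${S_i}_{t_i}=I$ forces $E=\{0\}$.
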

\begin{remark} \label{m25} Using same technique given in the proof of \cite[Proposition 3.4]{PS2}, it can be proved that the commuting $d$-tuple $S_{\bf t}^\prime$ also possesses wandering subspace property. \end{remark}

We first define operator valued multishift. 
Let $M$ be a nonzero complex Hilbert space and let $l^2_M(\mathbb N^d)$ denote the Hilbert space of square summable multisequence $\{h_{\alpha}\}_{\alpha \in \mathbb N^d}$ in $M.$ If $\{W_{\alpha}^{(j)}\}_{\alpha \in \mathbb N^d}\subseteq \cal B(M)$ for $j=1,\cdots ,d,$ then the linear operator $W_j$ in $l^2_M(\mathbb N^d)$ is defined by $W_j(h_\alpha)_{\alpha \in \mathbb N^d}=(k_\alpha)_{\alpha \in \mathbb N^d}$ for $(h_\alpha)_{\alpha \in \mathbb N^d}\in \cal D,$ where 
\begin{equation*}
 (k_\alpha)=
\begin{cases}
W_{\alpha-\epsilon_j}^{(j)}h_{\alpha-\epsilon_j} & \text{if~ $\alpha_j\geq 1$},\\
0 & \text{if~ $\alpha_j =0$}
\end{cases}
\end{equation*}
and ${\cal D}:=\{(h_\alpha)_{\alpha \in \mathbb N^d}\in l^2_M(\mathbb N^d)~:~(k_\alpha)_{\alpha \in \mathbb N^d}\in l^2_M(\mathbb N^d)\}.$ \\
The $d$-tuple $W=(W_1,\cdots ,W_d)$ is called {\it an operator valued multishift with operator weights $\{W_{\alpha}^{(j)}~:~\alpha \in \mathbb N^d,j=1,\cdots ,d\}$.}

The proof of the following proposition follows in a similar way as given in \cite[Corollary 4.1.12]{CPT}.
\begin{proposition} \label{m11} Let $S_{\bf t}=({S_1}_{t_1},\cdots ,{S_d}_{t_d})$ be a toral left invertible commuting $d$-tuple in $G_1\times \cdots \times G_d.$ Let $E$ be the joint kernel of $S_{\bf t}^*.$ Then $S_{\bf t}$ is unitarily equivalent to a commuting operator valued multishift $W$ on $l^2_E(\mathbb N^d).$ \end{proposition}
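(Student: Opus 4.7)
The plan is to combine the wandering subspace decomposition with a polar-decomposition trick to build an explicit unitary $U\colon l^2_E(\mathbb N^d)\to L^2(\mathbb R_+)$ that intertwines each ${S_j}_{t_j}$ with the $j$-th coordinate shift on $l^2_E(\mathbb N^d)$. First, Proposition \ref{m10} and Lemma \ref{m8} together yield the orthogonal direct sum decomposition $L^2(\mathbb R_+)=\bigoplus_{\alpha\in\mathbb N^d}S_{\bf t}^\alpha E$. Toral left invertibility gives ${S_i}_{t_i}^*{S_i}_{t_i}\geq c_i I$ for some $c_i>0$, so each ${S_i}_{t_i}$ is bounded below and consequently $S_{\bf t}^\alpha|_E\colon E\to S_{\bf t}^\alpha E$ is a linear bijection with bounded inverse for every $\alpha\in\mathbb N^d$.

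Next, for each $\alpha\in\mathbb N^d$ I would take the polar decomposition $S_{\bf t}^\alpha|_E=V_\alpha P_\alpha$, where $P_\alpha\in\cal B(E)$ is positive and invertible and $V_\alpha\colon E\to S_{\bf t}^\alpha E$ is unitary (with $V_0=I_E$). Define $U\colon l^2_E(\mathbb N^d)\to L^2(\mathbb R_+)$ by $U((e_\alpha)_\alpha)=\sum_\alpha V_\alpha e_\alpha$. Mutual orthogonality of the subspaces $S_{\bf t}^\alpha E$ together with the isometric nature of each $V_\alpha$ force $U$ to be an isometry, and the wandering subspace property makes it surjective; hence $U$ is unitary.

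Third, introduce operator weights $W_\alpha^{(j)}:=V_{\alpha+\epsilon_j}^*{S_j}_{t_j}V_\alpha\in\cal B(E)$ for $\alpha\in\mathbb N^d$ and $j=1,\ldots,d$; these are uniformly bounded since $\|W_\alpha^{(j)}\|\leq\|{S_j}_{t_j}\|$. Because $S_{\bf t}$ is commuting, ${S_j}_{t_j}V_\alpha E\subseteq {S_j}_{t_j}S_{\bf t}^\alpha E\subseteq S_{\bf t}^{\alpha+\epsilon_j}E$, so for any $(e_\alpha)\in l^2_E(\mathbb N^d)$ the $\beta$-th component of $U^*{S_j}_{t_j}U((e_\alpha))$ equals $V_\beta^*{S_j}_{t_j}V_{\beta-\epsilon_j}e_{\beta-\epsilon_j}=W_{\beta-\epsilon_j}^{(j)}e_{\beta-\epsilon_j}$ when $\beta_j\geq 1$ and vanishes when $\beta_j=0$. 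This is exactly the defining formula of the operator valued multishift with weights $\{W_\alpha^{(j)}\}$, and commutativity of the $W_j$'s is inherited from the commutativity of $S_{\bf t}$ through the unitary $U$.

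The main obstacle is step three: one must verify that the proposed $W_\alpha^{(j)}$'s actually act on the single Hilbert space $E$ with the correct index shift. The polar decomposition is what makes this possible, as it rigidifies every summand $S_{\bf t}^\alpha E$ to a canonical isometric copy of $E$ in a way that is compatible with the action of the commuting operators ${S_j}_{t_j}$.
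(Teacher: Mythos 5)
Your proof is correct and follows essentially the same route as the paper, which simply defers to \cite[Corollary 4.1.12]{CPT}: combine the wandering subspace property (Proposition \ref{m10}) with the mutual orthogonality of Lemma \ref{m8} to get $L^2(\mathbb R_+)=\bigoplus_{\alpha\in\mathbb N^d}S_{\bf t}^{\alpha}E$, identify each summand unitarily with $E$, and read off the operator weights. Your use of the polar decomposition to fix the identifying unitaries $V_\alpha$ is just an explicit normalization of that standard argument, so no further changes are needed.
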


We now define the kernel condition useful in constructing the analytic model.
\begin{definition} Let $T=(T_1,\cdots ,T_d)$ be a toral left invertible $d$-tuple on a Hilbert space $H$ and let $E$ denote the joint kernel of $T^*.$ Let $T^\prime$ be the toral Cauchy dual of $T.$ We say that $T$ satisfies {\it kernel condition} if $E\subseteq \rm{ker} T_j^*{T^\prime}^\alpha_{[j]}$ for all $j=1,\cdots ,d$ and for all $\alpha \in \mathbb N^d,$ where,   for $\alpha \in \mathbb N^d$ and $j=1,\cdots ,d,$ 
\begin{equation*}
{T^\prime}^\alpha_{[j]} =
\begin{cases}
I & \text{if~ $d=1$},\\
\prod_{i\neq j}{T_i^\prime}^{\alpha_i}  & \text{if~ $d \geq 2.$}
\end{cases}
\end{equation*}
\end{definition}

Note that the toral left invertible commuting $d$-tuple $S_{\bf t}=({S_1}_{t_1},\cdots ,{S_d}_{t_d})$ satisfies kernel condition \cite[Remark 4.2.3(iii)]{CPT}.

The following theorem is a multivariable analogue of Shimorin's analytic model developed in \cite[Theorem 3.7]{PS2}. The reader may compare this theorem with \cite[Theorem 4.2.4]{CPT} for a similar model developed in the context of multishifts on directed cartesian product of rooted directed trees.
\begin{theorem} \label{m12} Let $S_{\bf t}=({S_1}_{t_1},\cdots ,{S_d}_{t_d})$ be a toral left invertible commuting $d$-tuple in $G_1\times \cdots \times G_d.$ Let $E$ be the joint kernel of $S_{\bf t}^*$ and let $$r:=(r({S_1}_{t_1}^\prime)^{-1},\cdots ,r({S_d}_{t_d}^\prime)^{-1}),$$ where $r(T)$ denotes the spectral radius of the bounded operator $T.$ Then there exist a reproducing kernel Hilbert space $\cal H$ of $E$-valued analytic functions defined on the polydisc $\mathbb D_r^d$ with center origin and polyradius $r$ and a unitary operator $U:L^2(\mathbb R_+)\rightarrow \cal H$ such that $U{S_j}_{t_j}=M_{z_j}U$ for $j=1,\cdots ,d.$  \end{theorem}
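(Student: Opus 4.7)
The plan is to adapt Shimorin's analytic model to the multi-variable toral setting. Given $f \in L^2(\mathbb R_+)$, Proposition \ref{m10} together with Lemma \ref{m8} (mutual orthogonality of $\{S_{\bf t}^\alpha E\}_{\alpha \in \mathbb N^d}$) yields a unique orthogonal expansion $f = \sum_{\alpha \in \mathbb N^d} S_{\bf t}^\alpha e_\alpha$ with $e_\alpha \in E$ and $\|f\|^2 = \sum_\alpha \|S_{\bf t}^\alpha e_\alpha\|^2$. I would define the candidate unitary by
$$(Uf)(z) := \sum_{\alpha \in \mathbb N^d} e_\alpha\, z^\alpha, \qquad z \in \mathbb D_r^d,$$
set $\mathcal H := U(L^2(\mathbb R_+))$, and transfer the $L^2$-norm through $U$. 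With this definition $U$ is an isometry by construction, and what remains is to verify that $Uf$ is a genuine $E$-valued analytic function on $\mathbb D_r^d$, that $\mathcal H$ carries an operator-valued reproducing kernel, and that $U{S_j}_{t_j} = M_{z_j} U$ for every $j$.

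The first step is to identify the coefficients in a more usable form, namely $e_\alpha = P_E\, {S_{\bf t}^\prime}^{*\alpha} f$, where $P_E$ is the orthogonal projection onto $E$. This is where the kernel condition (which holds for $S_{\bf t}$, as noted just before the theorem) is essential: it forces the biorthogonality
$$P_E\, {S_{\bf t}^\prime}^{*\alpha} S_{\bf t}^\beta e \;=\; \delta_{\alpha,\beta}\, e, \qquad e \in E,\ \alpha,\beta \in \mathbb N^d,$$
which can be verified by induction on $|\alpha|$ using the commutativity of $S_{\bf t}^\prime$ and the defining relations ${S_j}_{t_j}^{\prime *} {S_j}_{t_j} = I$ and $E \subseteq \ker {S_j}_{t_j}^*\, {S_{\bf t}^\prime}_{[j]}^{\gamma}$. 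Once this identification is in place, convergence and analyticity on the whole polydisc follow from the uniform bound
$$\|e_\alpha\|\, |z|^\alpha \;\le\; \|{S_{\bf t}^\prime}^{*\alpha} f\|\, |z|^\alpha \;\le\; \|f\| \prod_{j=1}^d \|{S_j}_{t_j}^{\prime\,\alpha_j}\|\, |z_j|^{\alpha_j},$$
together with Gelfand's formula $\limsup_n \|{S_j}_{t_j}^{\prime\, n}\|^{1/n} = r({S_j}_{t_j}^\prime) = r_j^{-1}$: for any $z \in \mathbb D_r^d$ the right-hand side factors into a convergent product of geometric series, so $Uf$ is an absolutely convergent, hence analytic, $E$-valued power series on the entire polydisc $\mathbb D_r^d$.

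The reproducing-kernel property and the intertwining identity then follow by essentially routine arguments. For each $z \in \mathbb D_r^d$ the evaluation functional $\mathrm{ev}_z : \mathcal H \to E$ is bounded by the above estimate, which is the standard criterion for $\mathcal H$ to be an $E$-valued reproducing kernel Hilbert space with operator-valued kernel $K : \mathbb D_r^d \times \mathbb D_r^d \to \mathcal B(E)$, as formalized in \cite{CPT}. The identity $U{S_j}_{t_j} = M_{z_j} U$ is a direct consequence of the decomposition $f = \sum_\alpha S_{\bf t}^\alpha e_\alpha$: applying ${S_j}_{t_j}$ shifts the multi-index by $\epsilon_j$, which on the $U$-side is exactly multiplication by $z_j$.

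The main obstacle will be the identification $e_\alpha = P_E\, {S_{\bf t}^\prime}^{*\alpha} f$ and the associated biorthogonality of $\{S_{\bf t}^\alpha E\}$ and $\{{S_{\bf t}^\prime}^\alpha E\}$ via the kernel condition, since the bookkeeping of the non-commuting factors of $S_{\bf t}^*$ and $S_{\bf t}^\prime$ in the induction is more delicate than in the one-variable proof of \cite[Theorem 3.7]{PS2}. Everything after that reduces to the spectral-radius estimate above and the standard vector-valued RKHS formalism along the lines of \cite[Theorem 4.2.4]{CPT}.
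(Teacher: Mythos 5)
Your proposal is correct and follows essentially the same route as the paper's proof: the same Shimorin-type unitary $Uf=\sum_{\alpha}(P{S_{\bf t}^\prime}^{*\alpha}f)z^{\alpha}$ onto a space of $E$-valued analytic functions, the same use of the kernel condition, and the same spectral-radius estimate for the Cauchy dual to get convergence on $\mathbb D_r^d$. The only organizational difference is that you start from the wandering-subspace decomposition $f=\sum_{\alpha}S_{\bf t}^{\alpha}e_{\alpha}$ and recover the coefficients via the biorthogonality $P{S_{\bf t}^\prime}^{*\alpha}S_{\bf t}^{\beta}|_E=\delta_{\alpha\beta}I_E$ (which your induction, using ${S_j}_{t_j}^{\prime *}{S_j}_{t_j}=I$, commutativity of $S_{\bf t}^\prime$ and the kernel condition, does establish), whereas the paper defines $U_f$ by the coefficient formula outright, uses the wandering subspace property of $S_{\bf t}^\prime$ for injectivity of $U$, and deploys the kernel condition inside the intertwining computation; the two arrangements are interchangeable.
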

\begin{proof} For $f\in L^2(\mathbb R_+),$ define $$U_f(z):=\sum_{k\in \mathbb N^d}(P{S_{\bf t}^\prime }^{*k}f)z^k,~~z\in \mathbb C^d,$$ where $P$ is the orthogonal projection on $E.$ The power series $U_f$ converges absolutely on the polydisc $\mathbb D_r^d.$ Let $\cal H$ denote the complex vector space of $E$-valued analytic functions of the form $U_f.$ We now define a map $U:L^2(\mathbb R_+)\rightarrow \cal H$ given by $U(f)=U_f.$ By definition, $U$ is onto. We now need to show that $U$ is injective. 

Let $U(f)=U_f=0$ for some $f\in L^2(\mathbb R_+).$ Then $\sum_{k\in \mathbb N^d}(P{S_{\bf t}^\prime }^{*k}f)z^k=0.$ This implies that $P{S_{\bf t}^\prime }^{*k}f=0$ for all $k\in \mathbb N^d.$ The fact ${\rm ker}~{S_j}_{t_j}^{\prime *}={\rm ker}~{S_j}_{t_j}^*,$ implies that the joint kernel of the tuple ${S_{\bf t}^\prime *}=E.$ 
By the wandering subspace property of the $d$-tuple $S_{\bf t}^\prime$ [Remark \ref{m25}], we have $\bigvee_{k\in \mathbb N^d} {S_{\bf t}^\prime }^{k}= L^2(\mathbb R_+).$
By taking orthogonal complement on both sides, we get $\bigcap_{k\in \mathbb N^d} ({S_{\bf t}^\prime }^{k}(E))^\bot =\{0\}.$ Note that $({S_{\bf t}^\prime }^{k}(E))^\bot = {\rm ker}~P{S_{\bf t}^\prime }^{*k}$ for every $k\in \mathbb N^d.$ Thus $\bigcap_{k\in \mathbb N^d} {\rm ker}~P{S_{\bf t}^\prime }^{*k}=\{0\}.$ Recall that $P{S_{\bf t}^\prime }^{*k}f=0$ for all $k\in \mathbb N^d.$ Therefore $f \in \bigcap_{k\in \mathbb N^d} {\rm ker}~P{S_{\bf t}^\prime }^{*k}.$ This implies that $f=0.$ Hence $U$ is injective. 

The inner product on $\cal H$ can be now defined as $\left\langle U_f,U_g \right\rangle_{\cal H}=\left\langle f,g \right\rangle_{L^2(\mathbb R_+)}$ for all $f,g \in L^2(\mathbb R_+).$ Since $U$ is a sujective isometry, it is a unitary operator. A complete vector space $\cal H$ is now a Hilbert space. For $f\in L^2(\mathbb R_+),$
\beqn 
(U{S_j}_{t_j})(f)(z) &=& \sum_{k\in \mathbb N^d} (P{S_{\bf t}^\prime }^{*k}{S_j}_{t_j}f)z^k \\
&=& \sum_{k\in \mathbb N^d,k_j=0} (P{S_{\bf t}^\prime }^{*k}{S_j}_{t_j}f)z^k + \sum_{k\in \mathbb N^d,k_j\geq 1} (P{S_{\bf t}^\prime }^{*k}{S_j}_{t_j}f)z^k \\
&=& \sum_{k\in \mathbb N^d,k_j=0} (P{S_{{\bf t}[j]}^\prime }^{*k}{S_j}_{t_j}f)z^k + \sum_{k\in \mathbb N^d} (P{S_{\bf t}^\prime }^{*k+\epsilon_j}{S_j}_{t_j}f)z^{k+\epsilon_j} \\
&=& \sum_{k\in \mathbb N^d} (P{S_{\bf t}^\prime }^{*k+\epsilon_j}{S_j}_{t_j}f)z^{k+\epsilon_j}. 
\eeqn
Note that by kernel condition, $\sum_{k\in \mathbb N^d,k_j=0} (P{S_{{\bf t}[j]}^\prime }^{*k}{S_j}_{t_j}f)z^k =0.$
Since the toral Cauchy dual tuple $S_{\bf t}^\prime$ is commuting and ${S_j}_{t_j}^{\prime *}{S_j}_{t_j}=I,$ the sum on the right hand side of the last step is equal to $z_j \sum_{k\in \mathbb N^d} (P{S_{\bf t}^\prime }^{*k}f)z^k =z_jU_f(z)=M_{z_j}U(f)(z).$ 
Hence $U{S_j}_{t_j}=M_{z_j}U.$

Using Remark \ref{m9} and argument similar to \cite[Theorem 3.7(ii)]{PS2}, it can be proved that $(P{S_{\bf t}^\prime}^{*j}{S_{\bf t}^\prime}^k)|_E=0$ for all $j\neq k$ in $\mathbb N^d.$
We now compute the reproducing kernel for the RKHS $\cal H$ associated to the operator tuple $S_{\bf t}.$ These computations are similar as given in the proof of \cite[Proposition 2.13]{Sh}. 
Note that $E=$ker $S_{\bf t}^*$ is infinite dimensional. It is easy to see that for $e\in E,~z,\lambda \in \mathbb D_r^d$ and $x\in \mathbb R_+,~ \displaystyle
 k_{\cal H}(z,\lambda)e(x) 
= \left( \sum_{n \in \mathbb N^d} P{S_{\bf t}^\prime}^{*n}{S_{\bf t}^\prime}^n z^n\overline{\lambda}^n \right) e(x).$
It can be seen that for $f,g\in E$ and $\lambda \in \mathbb D_r^d,$ 
$\left\langle U_f, k_{\cal H}(.,\lambda)g \right\rangle_{\cal H}=\left\langle U_f(\lambda),g \right\rangle_{E}.$
Hence $\cal H$ is a reproducing kernel Hilbert space with kernel $k.$ 
\end{proof}


\begin{example} We now compute the reproducing kernel for the RKHS $\cal H$ associated to the pair $({S_1}_{t_1},{S_2}_{t_2})$ in some special cases. 
\begin{enumerate} 
\item Let $\varphi_1(x)=a,\varphi_2(x)=b.$ Then the pair $({S_1}_{t_1},{S_2}_{t_2})$ is a toral isometry. 
For $e\in E,~z,\lambda \in \mathbb D_r^2$ and $x \in \mathbb R_+,$
$$  k_{\cal H}(z,\lambda)e(x) 
= \left( \sum_{n \in \mathbb N^2} P{S_{\bf t}^\prime}^{*n}{S_{\bf t}^\prime}^n z^n\overline{\lambda}^n \right) e(x) 
= \left( \sum_{(n_1,n_2) \in \mathbb N^2} z_1^{n_1}z_2^{n_2}\overline{\lambda_1}^{n_1}\overline{\lambda_2}^{n_2} \right) e(x). $$
\item Let $\varphi_1(x)=\varphi_2(x).$ For $e\in E,~z,\lambda \in \mathbb D_r^2$ and $x \in \mathbb R_+,$
\beqn
 k_{\cal H}(z,\lambda)e(x) 
&=& \left( \sum_{n \in \mathbb N^2} P{S_{\bf t}^\prime}^{*n}{S_{\bf t}^\prime}^n z^n\overline{\lambda}^n \right) e(x) \\
&=& \left( \sum_{(n_1,n_2) \in \mathbb N^2} \frac{\varphi_1(x)}{\varphi_1(x+n_1t_1+n_2t_2)}z_1^{n_1}z_2^{n_2}\overline{\lambda_1}^{n_1}\overline{\lambda_2}^{n_2} \right) e(x). \eeqn \eop
\end{enumerate}
\end{example}

\subsection{Taylor Spectrum}
For the definition and basic properties of Taylor spectrum, the reader is referred to \cite{Cu}.
\begin{proposition} \label{m13} Let $S_{\bf t}=({S_1}_{t_1},\cdots ,{S_d}_{t_d})$ be a commuting $d$-tuple in $G_1\times \cdots \times G_d.$ For every $\theta \in \mathbb R$ there exists a unitary operator $M_\theta$ on $L^2(\mathbb R_+)$ such that $M_\theta^*{S_j}_{t_j}M_\theta =e^{-i\theta t_j}{S_j}_{t_j}$ for $j=1,\cdots ,d.$ \end{proposition}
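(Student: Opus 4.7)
The natural candidate for $M_\theta$ is the modulation operator $M_\theta f(x) := e^{i\theta x} f(x)$ for $f \in L^2(\mathbb{R}_+)$ and $x \in \mathbb{R}_+$. Since $|e^{i\theta x}| = 1$ almost everywhere, multiplication by this unimodular function is a unitary operator on $L^2(\mathbb{R}_+)$, with adjoint (and inverse) given by $M_\theta^* f(x) = e^{-i\theta x} f(x)$.

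The plan is then to verify the intertwining relation $M_\theta^* {S_j}_{t_j} M_\theta = e^{-i\theta t_j}{S_j}_{t_j}$ by a direct pointwise computation. Given $f \in L^2(\mathbb{R}_+)$, I would apply the three operators in turn: first $M_\theta$ produces $e^{i\theta x}f(x)$, then ${S_j}_{t_j}$ (by its defining formula from Section~2) sends this to ${\varphi_j}_{t_j}(x)\, e^{i\theta(x-t_j)} f(x-t_j)$ for $x \geq t_j$ and to $0$ for $x < t_j$, and finally $M_\theta^*$ multiplies by $e^{-i\theta x}$. The exponential factors collapse to $e^{-i\theta t_j}$, independent of $x$, so that
\begin{equation*}
(M_\theta^* {S_j}_{t_j} M_\theta f)(x) = e^{-i\theta t_j} {\varphi_j}_{t_j}(x) f(x-t_j) = e^{-i\theta t_j} ({S_j}_{t_j} f)(x)
\end{equation*}
for $x \geq t_j$, while both sides vanish for $x < t_j$.

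There is no real obstacle here: the only thing to check beyond unitarity is that the scalar $e^{-i\theta t_j}$ does not depend on the running variable $x$, which is precisely what makes the computation work, since ${S_j}_{t_j}$ shifts the argument by the fixed amount $t_j$. The argument applies uniformly in $j = 1, \ldots, d$, so a single unitary $M_\theta$ simultaneously intertwines all coordinate operators ${S_j}_{t_j}$ with their scalar multiples $e^{-i\theta t_j}{S_j}_{t_j}$, which is what the proposition asserts.
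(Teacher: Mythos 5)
Your proposal is correct and is essentially the paper's own proof: the paper defines the same modulation operator $(M_\theta f)(x)=e^{i\theta x}f(x)$ and leaves the intertwining relation as "it can be seen," which is exactly the pointwise computation you carry out. Your version just makes explicit the cancellation of the exponential factors that the paper omits.
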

\begin{proof} For $\theta \in \mathbb R,$ define the map $M_\theta :L^2(\mathbb R_+)\rightarrow L^2(\mathbb R_+)$ as $$(M_\theta f)(x)=e^{i\theta x}f(x).$$ Then clearly $M_\theta$ is a unitary operator. It can be seen that the operator ${S_j}_{t_j}$ is unitarily equivalent to the operator $e^{-i\theta t_j}{S_j}_{t_j}.$
\end{proof}
\begin{remark} \label{m14} 
By the Proposition \ref{m13}, the commuting $d$-tuples $({S_1}_{t_1},\cdots ,{S_d}_{t_d})$ and $(e^{-i\theta t_1}{S_1}_{t_1},\cdots ,e^{-i\theta t_d}{S_d}_{t_d})$ are unitarily equivalent.
Hence the Taylor spectrum of a commuting $d$-tuple $S_{\bf t}$ has poly-circular symmetry, that is, for any $w=(w_1,\cdots ,w_d) \in \sigma(S_{\bf t})$ and $z=(z_1,\cdots ,z_d) \in \mathbb T^d, z.w=(z_1w_1,\cdots ,z_dw_d) \in \sigma(S_{\bf t}).$ In particular, $\sigma(S_{\bf t}^*)=\sigma(S_{\bf t}).$
\end{remark}

For proving the connectedness of a Taylor spectrum of a commuting $d$-tuple $S_{\bf t},$ we need a lemma.
Given a positive integer $d,$ we set $H^{\oplus d}:=H\oplus \cdots \oplus H$ ($d$ times). For a commuting $d$-tuple $T=(T_1,\cdots ,T_d)$ of operators on a Hilbert space $H,$ consider the linear transformation $D_T~:~H\rightarrow H^{\oplus d}$ given by for $h\in H$ $$D_T(h):=(T_1h,\cdots ,T_dh).$$
Observe that ker $D_T=\rm{ker}~ T=\cap_{j=1}^d \rm{ker}~ T_j.$ 
\begin{lemma} \label{m15} Let $S_{\bf t}=({S_1}_{t_1},\cdots ,{S_d}_{t_d})$ be a commuting $d$-tuple. For $i\in \mathbb N,$ let $T^{(i)}$ denote the commuting $d$-tuple $({{S_1}_{t_1}^*}^i,\cdots ,{{S_d}_{t_d}^*}^i).$ Then $\cup_{i\in \mathbb N}\rm{ker}~ D_{T^{(i)}}$ is dense in $L^2(\mathbb R_+).$ \end{lemma}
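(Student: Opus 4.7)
The plan is to compute $\ker D_{T^{(i)}}$ explicitly for each $i$, observe that these subspaces form an increasing chain exhausting all square-integrable functions supported near the origin, and deduce density from the well-known density of compactly supported functions in $L^2(\mathbb R_+)$.

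First I would use the semigroup property $G_j = \{{S_j}_{s} : s \in \mathbb R_+\}$ noted earlier: since ${S_j}_{t_j} \circ {S_j}_{t_j} = {S_j}_{2t_j}$, iterating gives $({S_j}_{t_j})^i = {S_j}_{it_j}$, and therefore $({S_j}_{t_j}^*)^i = {S_j}_{it_j}^*$. Next, recall from the discussion preceding Proposition 3.1 (and from \cite[Lemma 3.2]{PS2}) that $\ker {S_j}_{s}^* = \chi_{[0,s)}L^2(\mathbb R_+)$ for every $s > 0$. Combining these two observations gives
\[
\ker ({S_j}_{t_j}^*)^i \;=\; \ker {S_j}_{it_j}^* \;=\; \chi_{[0,it_j)}L^2(\mathbb R_+).
\]

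Setting $t_* := \min\{t_1,\ldots,t_d\}$ and using the inclusion $\chi_{[0,s)}L^2(\mathbb R_+) \subseteq \chi_{[0,s')}L^2(\mathbb R_+)$ whenever $s \leq s'$, I would then intersect over $j$ to get
\[
\ker D_{T^{(i)}} \;=\; \bigcap_{j=1}^{d} \ker ({S_j}_{t_j}^*)^i \;=\; \chi_{[0,it_*)}L^2(\mathbb R_+).
\]
Taking the union over $i \in \mathbb N$ produces the set of all $L^2$-functions supported in $[0,it_*)$ for some $i$, i.e.\ all $L^2$-functions whose essential support is bounded in $\mathbb R_+$.

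Finally, since $it_* \to \infty$ as $i \to \infty$, this union contains $\chi_{[0,N]}L^2(\mathbb R_+)$ for every $N > 0$, and the space of compactly supported $L^2$-functions on $\mathbb R_+$ is dense in $L^2(\mathbb R_+)$ (given $f \in L^2(\mathbb R_+)$, the truncations $\chi_{[0,N]}f$ converge to $f$ in norm by dominated convergence). This yields the desired density. There is no real obstacle here; the argument is essentially bookkeeping, and the only point that merits explicit mention is the reduction $({S_j}_{t_j}^*)^i = {S_j}_{it_j}^*$ afforded by the semigroup structure, which makes the kernels easy to compute.
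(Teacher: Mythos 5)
Your proof is correct, and the first half — identifying $\ker D_{T^{(i)}}=\bigcap_{j=1}^d\ker({S_j}_{t_j}^*)^i=\chi_{[0,it_*)}L^2(\mathbb R_+)$ with $t_*=\min\{t_1,\dots,t_d\}$ via the semigroup identity $({S_j}_{t_j})^i={S_j}_{it_j}$ and the known description of $\ker {S_j}_{s}^*$ — is exactly what the paper does. Where you diverge is the density step. The paper argues that it suffices for the union to contain \emph{some} orthonormal basis, and then invokes the compactly supported wavelet-type basis $\{\widetilde{\psi_{jk}}\}$ of $L^2(\mathbb R_+)$ constructed in \cite[Theorem 3.7(iv)]{PS2}: each $\widetilde{\psi_{jk}}$ is supported in a bounded interval, hence lies in $\ker D_{T^{(i)}}$ for $i$ large enough. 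You instead observe that the union is the set of all $L^2$-functions with bounded essential support and appeal to the elementary fact that the truncations $\chi_{[0,N]}f\to f$ in norm. Your route is more self-contained and arguably cleaner, since it avoids importing the wavelet basis (which ultimately rests on a nontrivial construction); the paper's route has the mild advantage of reusing machinery already set up in \cite{PS2} and of exhibiting an explicit basis inside the union, which is occasionally useful elsewhere. Either argument is complete; there is no gap in yours.
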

\begin{proof} 
Note that ker $D_{T^{(i)}}=\chi_{[0,it_r)}L^2(\mathbb R_+),$ where $t_r=\min \{t_1,\cdots ,t_d\}.$
It is sufficient to prove that $\cup_{i\in \mathbb N}\rm{ker}~ D_{T^{(i)}}$ contains some orthonormal basis of $L^2(\mathbb R_+).$ 
Let $\{\widetilde{\psi_{jk}}\},$ where $j$ is an integer and $k$ is a non-negative integer be an orthonormal basis of $L^2(\mathbb R_+)$ as described in \cite[Theorem 3.7(iv)]{PS2}. For fixed $j$ and $k,$ if $\frac{k+1}{2^j}< it_r,$ then $\widetilde{\psi_{jk}} \in \rm{ker}~ D_{T^{(i)}}.$ Therefore each $\widetilde{\psi_{jk}} \in \rm{ker}~ D_{T^{(i)}}$ for some non-negative integer $i.$ Hence the orthonormal basis $\{\widetilde{\psi_{jk}}\}\subseteq \cup_{i\in \mathbb N}\rm{ker} D_{T^{(i)}}.$ \end{proof}

Now proof of the following proposition follows on the lines similar to the proof of \cite[Proposition 3.2.4]{CPT}.
\begin{proposition} \label{m16} The Taylor spectrum of a commuting $d$-tuple $S_{\bf t}=({S_1}_{t_1},\cdots ,{S_d}_{t_d})$ is connected. \end{proposition}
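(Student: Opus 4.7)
The plan is to follow the strategy alluded to by the reference to \cite[Proposition 3.2.4]{CPT}: combine Lemma~\ref{m15} with a block-triangular splitting of the Taylor spectrum adapted to the $S_{\bf t}^*$-invariant subspaces $M_i := \ker D_{T^{(i)}} = \chi_{[0,it_r)} L^2(\mathbb R_+)$, where $t_r = \min\{t_1,\ldots,t_d\}$, and then rule out a disconnection of $\sigma(S_{\bf t})$ using a Riesz idempotent.

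First I would establish the block structure. Since $M_i$ is ${S_j}_{t_j}^*$-invariant for each $j$, its orthogonal complement $N_i := M_i^\perp = \chi_{[it_r,\infty)} L^2(\mathbb R_+)$ is ${S_j}_{t_j}$-invariant, and each ${S_j}_{t_j}$ takes the block lower-triangular form
\[
{S_j}_{t_j} \;=\; \begin{pmatrix} A_j^{(i)} & 0 \\ C_j^{(i)} & B_j^{(i)} \end{pmatrix},
\]
with $A_j^{(i)} := P_{M_i} {S_j}_{t_j}|_{M_i}$ and $B_j^{(i)} := {S_j}_{t_j}|_{N_i}$. The identity $P_{M_i} {S_j}_{t_j} = A_j^{(i)} P_{M_i}$, a consequence of the ${S_j}_{t_j}$-invariance of $N_i$, shows that the $A_j^{(i)}$ pairwise commute; and the equality $(A_j^{(i)})^{*i} = {S_j}_{t_j}^{*i}|_{M_i} = 0$ (by the definition of $M_i$) makes each $A_j^{(i)}$ nilpotent. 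Hence the commuting tuple $A^{(i)} := (A_1^{(i)},\ldots,A_d^{(i)})$ has Taylor spectrum $\{0\}$. Invoking the splitting formula for the Taylor spectrum of a block lower-triangular commuting tuple, which goes through because $N_i$ is $S_{\bf t}$-invariant and the Koszul complex splits compatibly with $L^2(\mathbb R_+) = M_i \oplus N_i$, I obtain for every $i\in\mathbb N$:
\[
\sigma(S_{\bf t}) \;=\; \sigma(A^{(i)}) \cup \sigma(B^{(i)}) \;=\; \{0\} \cup \sigma(B^{(i)}).
\]
In particular $0 \in \sigma(S_{\bf t})$, which serves as the anchor point for the connectedness argument.

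For connectedness I argue by contradiction: suppose $\sigma(S_{\bf t}) = C_0 \sqcup C_1$ with $0 \in C_0$ and $C_1$ non-empty. Pick $f$ holomorphic on an open neighborhood of $\sigma(S_{\bf t})$ with $f \equiv 0$ near $C_0$ and $f \equiv 1$ near $C_1$, and form the Riesz idempotent $E := f(S_{\bf t})$ via the Taylor holomorphic functional calculus. Compatibility of the functional calculus with the block decomposition yields
\[
P_{M_i} E|_{M_i} \;=\; f(A^{(i)}) \;=\; 0, \quad i \in \mathbb N,
\]
since $\sigma(A^{(i)}) = \{0\} \subseteq C_0$ and $f$ vanishes near $0$. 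Thus $E(M_i) \subseteq N_i = \chi_{[it_r,\infty)} L^2(\mathbb R_+)$ for every $i$. For an arbitrary $x \in L^2(\mathbb R_+)$, the truncations $x_i := \chi_{[0,it_r)} x \in M_i$ converge to $x$ in norm, so $E x_i \to E x$ by continuity of $E$; since each $E x_i$ is supported on $[it_r,\infty)$, $E x$ vanishes on every compact subset of $\mathbb R_+$, forcing $E = 0$. This contradicts $C_1 \neq \emptyset$, so $\sigma(S_{\bf t})$ is connected.

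The main obstacle is justifying the block-triangular compatibility of the Taylor functional calculus, i.e., the identity $P_{M_i} f(S_{\bf t})|_{M_i} = f(A^{(i)})$. This reduces to a short exact sequence of Koszul complexes associated with the $S_{\bf t}$-invariant subspace $N_i$, which allows the holomorphic functional calculus to be computed block by block. It is the one step where the homological technology of \cite{CPT} is essential; once granted, the remaining ingredients (density of $\cup_i M_i$ from Lemma~\ref{m15}, nilpotence of $A^{(i)}$, and the truncation argument above) are routine.
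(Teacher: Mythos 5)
Your argument is workable, but it is not the route the paper takes. The paper's proof is simply the proof of \cite[Proposition 3.2.4]{CPT} run with Lemma \ref{m15} as input, and that proof never compresses $S_{\bf t}$ to the subspaces $M_i$: assume $\sigma(S_{\bf t})=\sigma_1\sqcup\sigma_2$ with both pieces nonempty and compact, invoke the Shilov idempotent theorem to split $L^2(\mathbb R_+)=H_1\dotplus H_2$ into jointly invariant subspaces with $\sigma(S_{\bf t}|_{H_l})=\sigma_l$, note that $0\in\sigma(S_{\bf t})$ (the joint kernel $E$ of $S_{\bf t}^*$ is nonzero, so the last stage of the Koszul complex at $0$ fails to be exact), say $0\in\sigma_1$, and then use the Taylor invertibility of $S_{\bf t}|_{H_2}$ to get $\sum_{|\alpha|=m}S_{\bf t}^\alpha H_2=H_2$ for every $m$. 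Pairing this against any $h\in\ker D_{T^{(k)}}$ (for which $S_{\bf t}^{*\alpha}h=0$ as soon as $|\alpha|>d(k-1)$) gives $\ker D_{T^{(k)}}\perp H_2$, and Lemma \ref{m15} then forces $H_2=\{0\}$, a contradiction. This avoids both the nilpotent compressions $A^{(i)}$ and the compatibility of the Taylor functional calculus with block-triangular decompositions, which is exactly the step you identify as the heavy one; what your version buys in exchange is a concrete picture ($E(M_i)\subseteq\chi_{[it_r,\infty)}L^2(\mathbb R_+)$ and the truncation limit) in place of the duality argument.

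Two concrete repairs are needed in your write-up. First, the equality $\sigma(S_{\bf t})=\sigma(A^{(i)})\cup\sigma(B^{(i)})$ is not valid in general: for an invariant subspace $N_i$ one only has the two-out-of-three inclusions $\sigma(S_{\bf t})\subseteq\sigma(A^{(i)})\cup\sigma(B^{(i)})$, $\sigma(A^{(i)})\subseteq\sigma(S_{\bf t})\cup\sigma(B^{(i)})$, and so on, and none of these by itself yields $0\in\sigma(S_{\bf t})$. Since $0\in\sigma(S_{\bf t})$ is the anchor of your Riesz-idempotent step, you should obtain it directly from $E=\cap_{j}\ker {S_j}_{t_j}^{*}\neq\{0\}$, which makes $\sum_j {S_j}_{t_j}L^2(\mathbb R_+)$ non-dense. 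Second, to write $P_{M_i}f(S_{\bf t})|_{M_i}=f(A^{(i)})$ you must verify that $f$ is holomorphic on a neighbourhood of $\sigma(S_{\bf t})\cup\sigma(A^{(i)})\cup\sigma(B^{(i)})$; this does hold here, because $\sigma(A^{(i)})=\{0\}\subseteq\sigma(S_{\bf t})$ and hence $\sigma(B^{(i)})\subseteq\sigma(S_{\bf t})\cup\{0\}=\sigma(S_{\bf t})$, but the check belongs in the proof. With these repairs your argument goes through; it is simply longer and needs more machinery than the argument the paper is pointing to.
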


\begin{remark} \label{m17} Recall that the point spectrum of each operator ${S_j}_{t_j}$ is empty \cite[Proposition 4.3(1)]{PS2}. This implies that the point spectrum of a commuting $d$-tuple $S_{\bf t}$ is empty. \end{remark}




\begin{remark} Recall that $\mathbb D_r^d$ is the polydisc with center origin and polyradius $r,$ where $r=(r({S_1}_{t_1}^\prime)^{-1},\cdots ,r({S_d}_{t_d}^\prime)^{-1}).$ Note that the polydisc $\mathbb D_r^d$ is contained in the point spectrum of $S_{\bf t}^*$ for a toral left invertible commuting $d$-tuple $S_{\bf t}.$ The proof of this fact is similar to the proof of \cite[Proposition 4.3(2)]{PS2}. By the poly-circular symmetry of the Taylor spectrum, $\mathbb D_r^d\subseteq \sigma(S_{\bf t}).$ Suppose $R$ is the polyradius $R=(r({S_1}_{t_1}),\cdots ,r({S_d}_{t_d})).$ Then by the projection property of the Taylor spectrum, $\sigma(S_{\bf t})\subseteq \sigma({S_1}_{t_1})\times \cdots \times \sigma({S_d}_{t_d})=\mathbb D_R^d.$ Also the Taylor spectrum of $S_{\bf t}$ is connected [Proposition \ref{m16}]. 
If the commuting $d$-tuple $S_{\bf t}$ is a toral isometry, then $r=R=(1,\cdots,1).$ In this case, the Taylor spectrum is a polydisc. 
\end{remark}  

\subsection{Comparative Analysis}
We now summarize the comparative analysis of the commuting $d$-tuple $S_{\bf t}$ under consideration and a multishift $S_{\bf \lambda}$ on directed cartesian product of rooted directed trees as described in \cite{CPT}.
\begin{enumerate} 
\item The joint kernel of the tuple $S_{\bf t}^*$ is always infinite dimensional but the joint kernel of $S_{\bf \lambda}^*$ may be finite dimensional \cite[Corollary 3.1.16]{CPT}. 
\item The toral Cauchy dual tuple $S_{\bf t}^\prime$ is always commuting but the toral Cauchy dual tuple $S_{\bf \lambda}^\prime$ may not be commuting \cite[Proposition 5.1.1]{CPT}. 
\item The tuple $S_{\bf t}$ admits polar decomposition. However, the multishift $S_{\bf \lambda}$ admits polar decomposition if and only if the toral Cauchy dual tuple of $S_{\bf \lambda}$ is commuting \cite[Proposition 5.1.1]{CPT}.
\item The multisequence $\{S_{\bf t}^k E\}_{k \in \mathbb N^d}$ is mutually orthogonal but, the multisequence $\{S_{\bf \lambda}^k E\}_{k \in \mathbb N^d}$ may not be mutually orthogonal.
\item The toral left invertible commuting $d$-tuple $S_{\bf t}$ satisfies kernel condition but the multishift $S_{\bf \lambda}$ may not satisfy kernel condition. (refer to paragraph after \cite[Remark 4.2.3]{CPT}).
\item The Taylor spectrum of the tuple $S_{\bf t}$ has poly circular symmetry. However, the operator tuple $S_{\bf \lambda}$ is strongly circular \cite[Proposition 3.2.1]{CPT}.
\end{enumerate}

\section{Weighted Translation semigroups in ${\cal B}(L^2({\mathbb R_+^d}))$}
All the properties of the weighted translation semigroup $\{S_t\}$ in ${\cal B}(L^2({\mathbb R_+}))$ proved in \cite{PS1} and \cite{PS2}, are also shared by the semigroup $\{S_{\overline{t}}\}$ in ${\cal B}(L^2({\mathbb R_+^d})).$  
For positive integer $d,$ let $L^2({\mathbb R_+^d})$ denote the Hilbert space of complex valued square integrable Lebesgue measurable functions on $\mathbb R_+^d.$ Let ${\cal B}(L^2({\mathbb R_+^d}))$ denote the algebra of bounded linear operators on $L^2({\mathbb R_+^d}).$  
\begin{definition}
For a measurable, positive function $\varphi$ defined on $\mathbb R_+^d$ and \\ $\overline{t}=(t_1,\cdots ,t_d),\overline{x}=(x_1,\cdots ,x_d)\in \mathbb R_+^d$ define the function \\ $\varphi_{\overline{t}} : \mathbb R_+^d \rightarrow \mathbb R_+$ by 
\begin{equation*}
\varphi_{\overline{t}}(\overline{x}) =
\begin{cases}
\displaystyle \sqrt {\frac{\varphi(\overline{x})}{\varphi(\overline{x}-\overline{t})}} & \text{if~ $x_i\geq t_i,1\leq i\leq d$},\\
0 & \text{otherwise}.
\end{cases}
\end{equation*}
Here, $\overline{x}-\overline{t}=(x_1-t_1,\cdots ,x_d-t_d).$ \end{definition}

Suppose that $\varphi_{\overline{t}}$ is essentially bounded for every $\overline{t} \in \mathbb R_+^d$.
\begin{definition}
For each fixed $\overline{t}\in \mathbb R_+^d,$ we define $S_{\overline{t}}$ on $L^2({\mathbb R_+^d})$ by  
\begin{equation*}
S_{\overline{t}}f(\overline{x}) =
\begin{cases}
\varphi_{\overline{t}}(\overline{x})f(\overline{x}-\overline{t}) & \text{if~ $x_i\geq t_i,1\leq i\leq d$},\\
0 & \text{otherwise}.
\end{cases}
\end{equation*}
\end{definition}
\begin{remark}
It is easy to see that for every $\overline{t}\in \mathbb R_+^d,~S_{\overline{t}}$ is a bounded linear operator on $L^2({\mathbb R_+^d})$ with $\|S_{\overline{t}}\|=\|\varphi_{\overline{t}}\|_\infty,$  where $\|\varphi_{\overline{t}}\|_\infty$ stands for the essential supremum of $\varphi_{\overline{t}}.$ 
The family $\{S_{\overline{t}}:t\in \mathbb R_+^d\}$ in ${\cal B}(L^2({\mathbb R_+^d}))$ is a semigroup with $S_{\overline{0}}=I,$ the identity operator and for all $\overline{t},\overline{s}~\in \mathbb R_+^d$, $S_{\overline{t}}\circ S_{\overline{s}}=S_{\overline{t+s}}.$ Here, $\overline 0$ is the $d$-tuple $(0,\cdots, 0)$ in $\mathbb R_+^d.$ \end{remark}

We say that $\varphi_{\overline{t}}$ is a {\it weight function corresponding to the operator $S_{\overline{t}}$}. Further, the semigroup $\{S_{\overline{t}}:\overline{t}\in \mathbb R_+^d\}$ is referred to as the {\it weighted translation semigroup with symbol $\varphi$}.
Throughout this work, we assume that the symbol $\varphi$ is a continuous function on $\mathbb R_+^d.$ 

By similar computations as in the case of an operator $S_t$ in ${\cal B}(L^2(\mathbb R_+)),$ it is easy to see that the adjoint of $S_{\overline t}$ is given by 
$$S_{\overline t}^*f({\overline x}) = \sqrt {\frac{\varphi({\overline x}+{\overline t})}{\varphi({\overline x})}}f({\overline x}+{\overline t}) \quad \mbox{for almost every}~ {\overline x} \in \mathbb R_+^d$$ and 
$$S_{\overline t}^*S_{\overline t}f({\overline x}) = \frac{\varphi({\overline x}+{\overline t})}{\varphi({\overline x})}f({\overline x}+{\overline t}) \quad \mbox{for almost every}~ {\overline x} \in \mathbb R_+^d.$$
Here, $\overline{x}+\overline{t}=(x_1+t_1,\cdots ,x_d+t_d).$

We now present the special types of weighed translation semigroups by choosing some special types of symbols. The characterizations of weighed translation semigroups in this case are similar to those as described in \cite[Corollary 3.3]{PS1}. 
\begin{example} Let $\{S_{\overline t}\}$ be a weighted translation semigroup with symbol $\varphi.$
\begin{enumerate}
\item
Let $\varphi_1(x,y)=e^{(x+y)} , \varphi_2(x,y)=e^{\sqrt{xy}}.$
All these functions are log convex. Thus the semigroups $\{S_{\overline t}\}$ corresponding to symbols $\varphi_i,i=1,2$ are hyponormal.
\item 
Let $\varphi_1(x,y)=2x-y-x^2+2xy-y^2+1, \varphi_2(x,y)=Ax^ay^b+1 \\ (A>0,a,b\geq 0, a+b\leq 1).$
All these functions are concave. Thus the semigroups $\{S_{\overline t}\}$ corresponding to symbols $\varphi_i,i=1,2$ are 2-hyperexpansive.
\item
The semigroup $\{S_{\overline t}\}$ is an $m$-isometry if $\phi$ is a polynomial of degree $m-1.$
\item
Let $\varphi_1(x,y)= \log (x+y+2), \varphi_2(x,y)=\sqrt{(x+1)(y+1)}, \\ \varphi_3(x,y)=x+y+1.$
All these functions are completely alternating.
Thus the semigroups $\{S_{\overline t}\}$ corresponding to symbols $\varphi_i,i=1,2,3$ are completely hyperexpansive.
\item
Let $\varphi_1(x,y)=\frac{1}{x+y+1},~ \varphi_2(x,y)=\frac{1}{\sqrt{x+y+1}},~ \varphi_3(x,y)=e^{-(x+y)}.$ All these functions are completely monotone. Thus the semigroups $\{S_{\overline t}\}$ corresponding to symbols $\varphi_i,i=1,2,3$ are subnormal contractions.
\item
Let $ \varphi_1(x,y)=ye^x+xe^y+1 , \varphi_2(x,y)=e^{xy}.$
All these functions are absolutely monotone.
Thus the semigroups $\{S_{\overline t}\}$ corresponding to symbols $\varphi_i,i=1,2$ are alternatingly hyperexpansive.
\item
Let $\varphi (x,y)=\frac{x+y+\lambda}{x+y+1},~\lambda \geq 0.$ 
Then $\frac{\partial ^n \varphi }{\partial x^{k_1}\partial y^{k_2}} =\frac{(1-\lambda)(-1)^{n-1}n!}{(x+y+1)^{n+1}},$ where \\ $n=k_1+k_2.$
If $0< \lambda <1,$ then $\varphi$ is a completely alternating function and the semigroup $\{S_{\overline t}\}$ is completely hyperexpansive.
If $\lambda >1,$ then $\varphi$ is a completely monotone function and the semigroup $\{S_{\overline t}\}$ is a subnormal contraction.
\end{enumerate} \end{example}

We now describe the kernel of the adjoint $S_{\overline t}^*,$ for $\overline t \neq \overline 0.$
\begin{lemma} For $\overline t \neq \overline 0,$ ker $S_{\overline t}^*=E=\chi_{[0,t_1)\times \cdots \times [0,t_d)}L^2(\mathbb R_+^d).$ In particular, ker $S_{\overline t}^*$ is infinite dimensional. \end{lemma}

\begin{remark} \label{m25} The following properties of $S_{\overline t}$ are analogues of the corresponding properties of $S_t.$ Let $\overline t \neq \overline 0.$  
\begin{enumerate}
\item An operator $S_{\overline t}$ is analytic.
\item An operator $S_{\overline t}$ possesses wandering subspace property.
\item A left invertible operator $S_{\overline t}$ possesses an analytic model.
\item A left invertible operator $S_{\overline t}$ is an operator valued weighted shift.
\item The spectrum of a left invertible operator $S_{\overline t}$ is a disc and the point spectrum is empty.
\end{enumerate}
\end{remark}

\section{The Special Tuple}
We now define a tuple of a special type. Recall that a commuting $d$-tuple $S_{\bf t}$ defined in Section 2, is a toral isometry if and only if each symbol $\varphi_i,$ \\ $1\leq i \leq d$ is a constant function [Remark \ref{m4}]. However, in the special case, defined below we do have non-constant functions as symbols for a commuting tuple which is a toral isometry.    

Let $\varphi_i$ be a measurable, positive function on $\mathbb R_+^d$ such that for each fixed $t_i\in \mathbb R_+,$ the function ${\varphi_i}_{t_i}$ defined by 
\begin{equation*}
{\varphi_i}_{t_i}(x_1,\cdots ,x_d) =
\begin{cases}
\displaystyle \sqrt{\frac{\varphi_i(x_1,\cdots ,x_d)}{\varphi_i(x_1,\cdots ,x_i-t_i,\cdots ,x_d)}} & \text{if~ $x_i\geq t_i$},\\
0 & \text{if~ $x_i < t_i$}
\end{cases}
\end{equation*}
is essentially bounded. 
\begin{definition}
For each fixed $t_i\in \mathbb R_+,$ we define ${S_i}_{t_i}$ on $L^2({\mathbb R_+^d})$ by  
\begin{equation*}
{S_i}_{t_i}f(x_1,\cdots ,x_d) =
\begin{cases}
{\varphi_i}_{t_i}(x_1,\cdots ,x_d)f(x_1,\cdots ,x_i-t_i,\cdots ,x_d) & \text{if~ $x_i\geq t_i$}\\
0 & \text{if~ $x_i < t_i$.}
\end{cases}
\end{equation*}
\end{definition}

\begin{remark} \label{m2} 
It is easy to see that for every $t_i\in \mathbb R_+,~{S_i}_{t_i}$ is a bounded linear operator on $L^2({\mathbb R_+^d})$ with $\|{S_i}_{t_i}\|=\|{\varphi_i}_{t_i}\|_\infty,$  where $\|{\varphi_i}_{t_i}\|_\infty$ stands for the essential supremum of ${\varphi_i}_{t_i}.$ 
The family $G_i=\{{S_i}_{t_i}:t_i\in \mathbb R_+\}$ in ${\cal B}(L^2({\mathbb R_+^d}))$ is a semigroup with ${S_i}_0=I,$ the identity operator. Note that the family $G_1\times \cdots \times G_d$ is also a semigroup. \end{remark}

We say that ${\varphi_i}_{t_i}$ is a {\it weight function corresponding to the operator ${S_i}_{t_i}$}. Further, the semigroup $G_i=\{{S_i}_{t_i}:t_i\in \mathbb R_+\}$ is referred to as the {\it weighted translation semigroup with symbol $\varphi_i$}. Throughout this work, we assume that the symbol $\varphi_i,1\leq i\leq d$ is a continuous function on $\mathbb R_+^d.$ 

\begin{remark} \label{m3} 
Observe that ${S_i}_{t_i}=S_{\overline t},$ where $\overline t=(0,\cdots ,0,t_i,0, \cdots ,0).$ Here, the $i^{th}$ component of the $d$-tuple $\overline t$ is $t_i$ and all other entries are zero. 
Therefore by Remark \ref{m25}, each operator ${S_i}_{t_i}$ is analytic and possesses wandering subspace property. Also the spectrum of each left invertible operator ${S_i}_{t_i}$ is a closed disc and the point spectrum is empty. \end{remark}

Consider a tuple $({S_1}_{t_1},\cdots ,{S_d}_{t_d})\in G_1\times \cdots \times G_d.$
It is easy to see that for $1\leq i,j \leq d,$
$${S_i}_{t_i}{S_j}_{t_j}={S_j}_{t_j}{S_i}_{t_i}$$ if and only if 
$$\frac{\varphi_i (x_1,\cdots ,x_d)\varphi_j (x_1,\cdots ,x_i-t_i,\cdots ,x_d)}{\varphi_i (x_1,\cdots , x_i-t_i,\cdots ,x_d)\varphi_j (x_1,\cdots ,x_i-t_i,\cdots ,x_j-t_j, \cdots ,x_d)}$$
$$=\frac{\varphi_j (x_1,\cdots ,x_d)\varphi_i (x_1,\cdots ,x_j-t_j,\cdots ,x_d)}{\varphi_j (x_1,\cdots ,x_j-t_j,\cdots ,x_d)\varphi_i (x_1,\cdots ,x_i-t_i,\cdots ,x_j-t_j, \cdots ,x_d)}$$
for all $x_i\geq t_i,x_j\geq t_j.$ 
Note that if $\varphi_i=\varphi_j,$ then above condition is satisfied.
\begin{example} Let $b,c,\lambda $ be positive real numbers. The pair $({S_1}_{t_1},{S_2}_{t_2})$ is commuting for the following symbols: 
\begin{enumerate}
\item $\varphi_1(x,y)=c,~~\varphi_2(x,y)=b$
\item $\varphi_1(x,y)=c,~~\varphi_2(x,y)=e^{-x-y}$
\item $\varphi_1(x,y)=c,~~\varphi_2(x,y)=e^{x+y}$
\item $\varphi_1(x,y)=e^{-x-y},~~\varphi_2(x,y)=e^{x+y}$
\item $\varphi_1(x,y)=y+1,~~\varphi_2(x,y)=x+1$
\item $\displaystyle \varphi_1(x,y)=\frac{x+\lambda}{x+y+1},~~\varphi_2(x,y)=\frac{y+\lambda}{x+y+1}$ \eop
\end{enumerate}
\end{example}

It is easy to see that $${S_i}_{t_i}^*f(x_1,\cdots ,x_d) = \sqrt{\frac{\varphi_i(x_1,\cdots ,x_i+t_i,\cdots ,x_d)}{\varphi_i(x_1,\cdots , ,x_d)}}f(x_1,\cdots ,x_i+t_i,\cdots ,x_d)$$ for almost every $(x_1,\cdots ,x_d)\in \mathbb R_+^d$ 
and $${S_i}_{t_i}^*{S_i}_{t_i}f(x_1,\cdots ,x_d) = \frac{\varphi_i(x_1,\cdots ,x_i+t_i,\cdots ,x_d)}{\varphi_i(x_1,\cdots , ,x_d)}f(x_1,\cdots ,x_d)$$ for almost every $(x_1,\cdots ,x_d) \in \mathbb R_+^d.$ 

We say that the semigroup $G_1\times \cdots \times G_d$ is a {\it toral isometry} if every commuting $d$-tuple $({S_1}_{t_1},\cdots ,{S_d}_{t_d})$ in $G_1\times \cdots \times G_d$ is a toral isometry.
\begin{remark} 
The commuting $d$-tuple $({S_1}_{t_1},\cdots ,{S_d}_{t_d})$ is a toral isometry if and only if $$I-{S_i}_{t_i}^*{S_i}_{t_i}=0 ~{\rm for~ all}~ 1\leq i\leq d.$$ 
A simple calculation reveals that this condition holds if and only if $$\varphi_i(x_1,\cdots ,x_d)-\varphi_i(x_1,\cdots ,x_i+t_i,\cdots ,x_d)=0 ~~\forall x_1,\cdots ,x_d\in \mathbb R_+.$$
Therefore the semigroup $G_1\times \cdots \times G_d$ is a toral isometry if and only if 
$$\varphi_i(x_1,\cdots ,x_d)-\varphi_i(x_1,\cdots ,x_i+t_i,\cdots ,x_d)=0 ~~\forall x_1,\cdots ,x_d,t_i\in \mathbb R_+.$$
But this condition holds if and only if $\varphi_i$ is independent of $x_i$ for $1\leq i\leq d.$ \end{remark}
\begin{remark} The toral left invertible commuting $d$-tuple $({S_1}_{t_1},\cdots ,{S_d}_{t_d})$ also admits polar decomposition as given in the Proposition 2.6. \end{remark}
We now turn our attention towards constructing examples of different classes of operator tuples. The definitions of classes of operator tuples under consideration are given in Section 2. We use Proposition \ref{m5} to construct following examples. 
\begin{example} \ 
\begin{enumerate}
\item The functions $\varphi_1 (x,y)=\sqrt{x+y+1}, \varphi_1 (x,y)=\log (x+y+2),$\\$\varphi_1(x,y)=\frac{x+y+\lambda}{x+y+1} (0< \lambda <1) $ are completely alternating. Therefore each weighted translation semigroup ${S_1}_{t_1}$ with symbol $\varphi_1$ is completely hyperexpansive. Hence each pair $({S_1}_{t_1},{S_1}_{t_1})$ is a toral complete hyperexpansion and the pair $({S_1}_{t_1}/\sqrt 2,{S_1}_{t_1}/\sqrt 2)$ is a spherical complete hyperexpansion. 

\item The pair $({S_1}_{t_1},{S_2}_{t_2}),$ where $\varphi_1(x,y)=y+1,\varphi_2(x,y)=x+1$ is a toral isometry. Then the pair $({S_1}_{t_1}/\sqrt 2,{S_2}_{t_2}/\sqrt 2)$ is a spherical isometry.

\item The pair $({S_1}_{t_1},{S_2}_{t_2}),$ where $\varphi_1(x,y)=c,\varphi_2(x,y)=x+1$ is a toral 2-isometry. Then the pair $({S_1}_{t_1}/\sqrt 2,{S_2}_{t_2}/\sqrt 2)$ is a spherical 2-isometry.

\item The weighted translation semigroup corresponding to the symbol \\ $\varphi_1 (x,y)=x+y+1$ is 2-isometry. Hence the pair $({S_1}_{t_1},{S_1}_{t_1})$ is a toral 2-isometry and the pair $({S_1}_{t_1}/\sqrt 2,{S_1}_{t_1}/\sqrt 2)$ is a spherical 2-isometry. \eop
\end{enumerate}
\end{example} 

We here quote a proposition useful in constructing hyponormal tuples.
\begin{proposition} \cite[Remark 7]{At-1} If $H_1,\cdots ,H_m\in {\cal B}(H),~~H_i^*H_j=H_j^*H_i$ for $i\neq j$ and each $H_i$ is hyponormal, then the tuple $(H_1,\cdots ,H_m)$ is hyponormal. \end{proposition}
Note that the above proposition is not useful to construct hyponormal tuples of the type $S_{\bf t}$ defined in Section 2. 

Consider the pair $({S_1}_{t_1},{S_2}_{t_2}).$ It is easy to see that $${S_1}_{t_1}^*{S_2}_{t_2}={S_2}_{t_2}{S_1}_{t_1}^*$$ if and only if $$\frac{\varphi_1(x+t_1,y)\varphi_2(x+t_1,y)}{\varphi_1(x,y)\varphi_2(x+t_1,y-t_2)}=\frac{\varphi_2(x,y)\varphi_1(x+t_1,y-t_2)}{\varphi_2(x,y-t_2)\varphi_1(x,y-t_2)}$$
and $${S_2}_{t_2}^*{S_1}_{t_1}={S_1}_{t_1}{S_2}_{t_2}^*$$ if and only if $$\frac{\varphi_2(x,y+t_2)\varphi_1(x,y+t_2)}{\varphi_2(x,y)\varphi_1(x-t_1,y+t_2)}=\frac{\varphi_1(x,y)\varphi_2(x-t_1,y+t_2)}{\varphi_1(x-t_1,y)\varphi_2(x-t_1,y)}.$$
\begin{example} We now construct examples of hyponormal tuples.
\begin{enumerate}
\item
Let $b,c$ be positive real numbers. The pair $({S_1}_{t_1},{S_2}_{t_2})$ is hyponormal for the following pair of functions:
\begin{enumerate}
\item $\varphi_1(x,y)=c,~~\varphi_2(x,y)=b$ 
\item $\varphi_1(x,y)=e^x,~~\varphi_2(x,y)=e^y$
\item $\varphi_1(x,y)=c,~~\varphi_2(x,y)=e^{x+y}$
\item $\varphi_1(x,y)=e^{-x-y},~~\varphi_2(x,y)=e^{x+y}$
\item $\varphi_1(x,y)=e^{-y},~~\varphi_2(x,y)=e^{-x}$
\end{enumerate}  
\item The functions $\varphi_1(x,y)=e^{-(x+y)},\varphi_1(x,y)=\frac{1}{x+y+1}$ are completely monotone. Therefore each weighted translation semigroup ${S_1}_{t_1}$ with symbol $\varphi_1$ is subnormal. Hence by Proposition \ref{m7}, each tuple $(I,{S_1}_{t_1},{S_1}_{t_1}^2,\cdots ,{S_1}_{t_1}^{p-1}),$ \\ $p\geq 1$ is hyponormal. \eop \end{enumerate} \end{example}

Using similar techniques as discussed in Section 3, the toral analytic model for the special tuple can be constructed. Further, the properties of Taylor spectrum are similar.

\end{document}